\newtheorem{theorem}{Theorem}[section]
\newtheorem{lemma}[theorem]{Lemma}
\newtheorem{proposition}[theorem]{Proposition}
\newtheorem{definition}[theorem]{Definition}
\newtheorem{example}[theorem]{Example}
\def\wht{\widehat{\theta}}
\def\hx{\widehat{x}}
\def\C{\mathbb{C}}
\def\E{\mathbb{E}}
\def\P{\mathbb{P}}
\def\N{\mathcal{N}}
\def\labs{\left|}
\def\rabs{\right|}
\def\la{\left\langle}
\def\ra{\right\rangle}
\def\ln{\left\|}
\def\rn{\right\|}
\def\lb{\left(}
\def\rb{\right)}
\def\lsb{\left[}
\def\rsb{\right]}
\def\lcb{\left\{}
\def\rcb{\right\}}
\def\rank{\operatorname{rank}}
\def\tr{\operatorname{trace}}
\def\subto{\mbox{s.t.}}
\def\A{\mathcal{A}}
\def\R{\mathbb{R}}
\def\diag{\operatorname{diag}}
\def\dist{\operatorname{dist}}
\def\sign{\operatorname{sign}}
\DeclareMathOperator*{\argmin}{arg\,min}
\title{Solving systems of phaseless equations via Kaczmarz methods: A proof of concept study}
\author{Ke Wei\thanks{Department of Mathematics, Hong Kong University of Science and Technology, Hong Kong (makwei@ust.hk).}}
\begin{document}
\maketitle
\begin{abstract}
We  study the Kaczmarz methods for solving systems of phaseless equations, i.e., the generalized phase retrieval problem. 
The methods extend the Kaczmarz methods for solving systems of linear equations by integrating a phase selection heuristic in each iteration and overall have the 
same per iteration computational complexity.
Extensive empirical performance comparisons establish the computational advantages of the Kaczmarz methods over other state-of-the-art
phase retrieval algorithms both in terms of the number of measurements needed for successful recovery and in terms of  computation time. 
Preliminary convergence analysis is presented for the randomized Kaczmarz methods. 
\end{abstract}

{\bf Keywords.} Generalized phase retrieval, Kaczmarz methods, alternating projection, phase selection heuristic.

{\bf Mathematics Subject Classification.} 49N30, 49N45, 65F10, 65F20, 65F22, 41A65.
\section{Introduction}\label{sec:intr}
\subsection{The problem of phase retrieval}
In general, phase retrieval  is about  recovering a vector from the magnitude measurements, or equivalently solving a system of phaseless equations:
\begin{equation}\label{eq:problem_setup}
y_r=\left|\la a_r,x\ra\right|^2,~r=1,\cdots,m,
\end{equation}
where  $x\in\C^n$ and $a_r\in\C^n$. Let $A\in\C^{m\times n}$ be a matrix whose rows are $\{a_r^*\}_{1\leq r\leq m}$ and $y=(y_1,\cdots,y_m)^T$. 
The set of quadratic equations in \eqref{eq:problem_setup} can be formulated as $\sqrt{y}=|Ax|$, where $\sqrt{y}:=(\sqrt{y_1},\cdots,\sqrt{y_m})^T$ and
$|Ax|:=(\left|\la a_1,x\ra\right|,\cdots,\left|\la a_m,x\ra\right|)^T$. 
Despite its simple form, phase retrieval  arises in a wide range of practical context such as X-ray crystallography \cite{Ha93a}, diffraction imaging \cite{Buetal07} and microscopy \cite{Mietal08}, where the detector cannot measure the phase of the optical wave directly but only its magnitudes.  

Let $\widehat{x}$ be a solution to the 
phase retrieval problem. Apparently, $\widehat{x}e^{i\theta}$ is also a solution for any $\theta\in[0,2\pi)$. Therefore
the uniqueness of the solution to \eqref{eq:problem_setup} can only be defined up to a global phase factor. It has been shown  that $2n-1$ generic
magnitude measurements suffice to determine a unique solution to \eqref{eq:problem_setup}  if the measurement vectors $\{a_r\}_{1\leq r\leq m}$  and decision variables $x$ are real-valued \cite{BaCaEd06a}, while  $4n-4$
measurements are sufficient \cite{CoEdHeVi2013a} for the complex measurements and variables. 
\subsection{Existing methods for phase retrieval}
In this section, we review some of the existing methods for phase retrieval.
The classical algorithms for phase retrieval are Error Reduction (ER, also known as Gerchberg-Saxton) and its variants which were pioneered 
by Gerchberg and Saxton \cite{GeSaPhase} and Fienup \cite{fienup1,fienup2}. Error Reduction 
is an alternating projection algorithm between the image and signal spaces, see Alg.~\ref{alg:gesa}. In each iteration, the current estimate is first projected in the image space so that
the magnitudes of its image match the measurements and then the new estimate is obtained by a least squares fitting. The first two steps of Alg.~\ref{alg:gesa} can also be interpreted
as a phase selection heuristic which approximates the image phase of the solution by that of the current estimate. 
\begin{algorithm}[htp]
\caption{Error Reduction (ER)}\label{alg:gesa}
\begin{algorithmic}
\STATE\textbf{Initilization}: $x_0$
\FOR{$l=0,1,\cdots$}
\STATE 1. $\theta_l=\angle A x_l$, where the angle is computed for each entry of $A x_l$
\STATE 2. $z_l=\sqrt{y}\odot e^{i\theta_l}$, $\odot$ is the elementwise multiplication
\STATE 3. $x_{l+1}=\argmin\limits_x\ln Ax-z_l\rn_2=A^\dagger z_l$ 
\ENDFOR
\end{algorithmic}
\end{algorithm}
As suggested by its name, ER satisfies the residual reduction property $\ln |Ax_{l+1}|-\sqrt{y}|\rn_2\leq\ln |Ax_l|-\sqrt{y}\rn_2$
which follows straightforward from the short calculation:
\begin{eqnarray}\label{eq:er}
\ln |Ax_{l+1}|-\sqrt{y}\rn_2 &=&\ln |Ax_{l+1}|-|\sqrt{y}\odot e^{i\theta_l}|\rn_2\nonumber\\
&\leq&\ln Ax_{l+1}-\sqrt{y}\odot e^{i\theta_l}\rn_2\nonumber\\
&\leq&\ln Ax_l-\sqrt{y}\odot e^{i\theta_l}\rn_2\nonumber\\
&=&\ln |Ax_l|\odot e^{i\theta_l}-\sqrt{y}\odot e^{i\theta_l}\rn_2\nonumber\\
&=&\ln |Ax_l|-\sqrt{y}\rn_2,
\end{eqnarray}
where the first inequality follows from the  triangle inequality and the second inequality follows from the definition of $x_{l+1}$.
When $A$ corresponds to Fourier transform, the last two steps in Alg.~\ref{alg:gesa} essentially adjust the phase of the measurements and then 
seek a signal which fits the magnitude measurements. Moreover, if the signal is known to belong to a convex set $C$, the new estimate can be
obtained by further projecting $A^\dagger z_l$ onto $C$ as
\begin{equation*}
x_{l+1}=P_C\lb A^\dagger z_l\rb.
\end{equation*}

Let $\A$ be a linear map from $n\times n$ Hermitian matrices to $m$ dimensional vectors defined as
\begin{equation*}
\A(X)_r = a_r^*Xa_r, ~r = 1,\cdots,m.
\end{equation*} 
Then the magnitude measurements in \eqref{eq:problem_setup} can be lifted up as the linear measurements with respect to 
the rank one matrices of the form $X=xx^*$:
\begin{equation*}
y_r = |a_r^*x|^2 = (a_r^*x)(x^*a_r) = a_r^*Xa_r=\A(X)_r.
\end{equation*}
Based on this observation,  finding the solution to \eqref{eq:problem_setup} is equivalent to a rank minimization problem
\begin{eqnarray}\label{eq:min_rank}
\min && \rank(X)\nonumber\\
\subto &&\A(X) = y\\
&& X\succeq 0\nonumber
\end{eqnarray}
by noting that the optimal solution to \eqref{eq:min_rank} coincides with $\hx\hx^*$ when  $\hx$  is a solution to \eqref{eq:problem_setup}. 
However,  rank minimization  is a non-convex optimization problem. 
 Inspired by recent work on matrix completion, Cand\`es et al. \cite{phaselift1} propose an approach called PhaseLift by replacing the rank
functional in \eqref{eq:min_rank} with the matrix trace norm and instead solving a semidefinite programming
\begin{eqnarray}\label{eq:min_trace}
\min && \tr(X)\nonumber\\
\subto &&\A(X) = y\\
&& X\succeq 0.\nonumber
\end{eqnarray}
For certain random models,  
 it can be established that \eqref{eq:min_rank} and \eqref{eq:min_trace} share the same optimal solution if the number of measurements is (nearly) proportional to the size of the signal \cite{phaselift2, phaselift3, phaselift4}. 
In \cite{phasecut}, a different semidefinite programming called PhaseCut is proposed
by splitting the phase and magnitude variables.

Very recently, a line search algorithm called Wirtinger Flow (\cite{wirtinger}, see Alg.~\ref{alg:wirtinger}) has been developed by applying gradient descent iterations to the loss function
\begin{equation}
f(x) = \frac{1}{4m}\ln |Ax|^2-y\rn_2^2.
\end{equation} 
In each iteration of Wirtinger Flow, the current estimator $x_l$ is updated along the gradient descent direction $-\nabla f(x_l)$ with the stepsize $\mu_l$.
\begin{algorithm}[htp]
\caption{Wirtinger Flow}\label{alg:wirtinger}

\begin{algorithmic}
\STATE \textbf{Initilization}: $x_0$, $\mu_l$
\FOR{$l=0,1,\cdots$}
\STATE 1. $\nabla f(x_l) =\frac{1}{m}A^*\lb \lb|Ax_l|^2-y\rb\odot\lb Ax_l\rb\rb$
\STATE 2. $x_{l+1} = x_l-\frac{\mu_l}{\ln x_0\rn_2^2}\nabla f(x_l)$
\ENDFOR
\end{algorithmic}
\end{algorithm}
Although $f(x)$ is not a convex function globally,\footnote{When $a_r$ ($r=1,\cdots,m$) are real-valued vectors, $f(x)$ is a degree-four polynomial which is generally non-convex, for example $(x^2-1)^2,~x\in\R$.} it has been proven in \cite{wirtinger} that for certain random measurement vectors, with high probability $f(x)$ is  a strongly convex 
function in a neighbourhood of $\hx$ when the number of measurements is nearly proportional to the length of the measured vector. Therefore  exponential convergence (or linear convergence) can be established for Wirtinger Flow if the initial point $x_0$ is selected to be in this small
neighbourhood of $\hx$. 

Moreover, there have been many algorithms designed especially for compressive phase retrieval problems in which the signals are known to be sparse, see \cite{gamp,gespar} and references therein. 

\subsection{Kaczmarz methods for linear equations}\label{sec:kacz_linear}
The simple Kaczmarz method \cite{kaczmarz}, also known as the  Algebraic Reconstruction Technique (ART) \cite{art}, is an iterative projection algorithm which was initially
designed for solving a  system of linear equations $Ax=y$. In the $l$th iteration, if the $r$th row of $A$ is selected, then the new estimate $x_{l+1}$ is obtained by projecting current estimate $x_l$ 
onto the hyperplane determined by the linear equation $\la a_{r},x\ra=y_{r}$ as
\begin{equation*}
x_{l+1} = x_l + \frac{y_{r}-\la a_{r},x_l\ra}{\ln a_r\rn_2^2}a_{r}.
\end{equation*}
The deterministic version of the simple Kaczmarz method usually sweeps through the rows of $A$ in a cyclic manner. The corresponding convergence results that appear in
the literature are all based on the quantities of $A$ that are hard to compute and involve the expressions  which do not have 
a clear geometric meaning \cite{kacrate1, kacrate2,kacrate3}. Strohmer and Vershynin \cite{StVe09b} provide 
the first exponential convergence analysis for a randomized version of the simple Kaczmarz method in terms of the scaled condition number of $A$ defined as 
\begin{equation*}
k(A) =\| A\|_F\| A^\dagger\|_2,
\end{equation*}
where $A^\dagger$ denotes the Moore-Penrose pseudo-inverse of $A$. In each iteration, the randomized Kaczmarz method randomly picks up a row of $A$
with the probability proportional to the square norms of the rows, that is
\begin{equation}
\P\lcb \mbox{the }r\mbox{th row is selected}\rcb = \frac{\ln a_r\rn_2^2}{\ln A\rn_F^2},~r=1,\cdots,m.
\end{equation}

In the block Kaczmarz method \cite{Elf80}, instead of selecting a single row of $A$ in each iteration, a subset of rows  are selected, denoted by $A_{\Gamma}$. 
Then the current estimate $x_l$ is updated by being projected onto the solution space of $A_{\Gamma}x=b_{\Gamma}$,
\begin{equation*}
x_{l+1} = x_l+A_{\Gamma}^\dagger\lb y_{\Gamma}-A_{\Gamma}x_l\rb.
\end{equation*}
The exponential convergence of the randomized block Kaczmarz method can be similarly established provided the partition of the measurement matrix is known, see  \cite{NeTr12a}.

When the linear system is inconsistent, exponential convergence to a neighbourhood of the desired solution is established for 
the randomized simple and block Kaczmarz methods in \cite{Need10a} and \cite{NeTr12a}. In order to apply the
Kaczmarz methods to solve the least squares problems, the extended Kaczmarz methods are designed to simultaneously decrease the system inconsistency
by  orthogonal column projections and update the estimate by the Kaczmarz methods \cite{Popa99, ZoFr2013}. Additional references
for Kaczmarz method include \cite{ElNe11a,NeWa13a,LvLe10}.
In the Kaczmarz methods, the residual does not decrease monotonically. In each iteration, while the residual of the selected rows reduces
to zero, the residual of the other rows will increase. 

\subsection{Main contributions and outline}
The main contributions of this manuscript are: 
\begin{enumerate}
\item
We introduce a family of Kaczmarz methods to find the solutions to Eq.~\eqref{eq:problem_setup}. The new methods are developed by integrating a phase selection heuristic into the classical Kaczmarz methods for linear equations. 
\item 
Extensive numerical experiments demonstrate the feasibility of applying the Kaczmarz methods for linear equations to solve the generalized phase retrieval problems, and establish the computational advantages of the Kaczmarz methods over prior art. 
\end{enumerate}

The rest of this manuscript is organised as follows. In section~\ref{sec:kacz_pr}, we derive and discuss the properties of the Kaczmarz methods for phase retrieval.
In section~\ref{sec:numerics}, we present detailed numerical comparisons of the Kaczmarz methods with ER and Wirtinger Flow. The preliminary convergence analysis for the randomized
Kaczmarz methods is presented in section~\ref{sec:proof}. Section~\ref{sec:conclusion} concludes this manuscript with some future research directions.
\section{Kaczmarz methods for phase retrieval}\label{sec:kacz_pr}
\subsection{The simple Kaczmarz method}\label{sec:simp_kacz}

\begin{algorithm}[htp]
\caption{Simple Kaczmarz}\label{alg:simp_kacz}
\begin{algorithmic}
\STATE\textbf{Initilization}: $x_0$
\FOR{$l=0,1,\cdots$}
\STATE 1. select a row of $A$, denoted by $a_r^*$, either in a deterministic manner or randomly
\STATE 2. $\theta_l=\angle\la a_r,x_l\ra$
\STATE 3. $x_{l+1} = x_l+\frac{\sqrt{y_r}e^{i\theta_l}-\la a_r,x_l\ra}{\ln a_r\rn_2^2}a_r$
\ENDFOR
\end{algorithmic}
\end{algorithm}

We first present the simple Kaczmarz method for the phase retrieval problem \eqref{eq:problem_setup}, see Alg.~\ref{alg:simp_kacz}. 
In each iteration of the simple Kaczmarz method, it firstly selects a row of the measurement matrix either in a deterministic (e.g., cyclic) manner or 
randomly, and then projects the current estimate $x_l$  onto the hyperplane 
\begin{equation*}
\lcb x:~\la a_r,x\ra = \sqrt{y_r}e^{i\theta_l}\rcb~\mbox{ with }~\theta_l =\angle\la a_r,x_l\ra\in[0,2\pi),
\end{equation*}
where the image phase of the solution is approximated by that of the current estimate.
The selection of $\theta_l$ can also be interpreted in another way. Suppose in the $l$th iteration, we pick
up an arbitrary $\theta\in[0,2\pi)$, and then project $x_l$ onto the hyperplane
\begin{equation*}
\lcb x:~\la a_r,x\ra = \sqrt{y_r}e^{i\theta}\rcb.
\end{equation*}
Similarly the projection is given by
\begin{equation*}
x_{l+1}^\theta = x_l+\frac{\sqrt{y_r}e^{i\theta}-\la a_r,x_l\ra}{\ln a_r\rn_2^2}a_r.
\end{equation*}
Among all the candidates of $x_{l+1}^\theta$, it can be easily verified that $x_{l+1}$ 
is the one that minimizes the distance between $x_{l+1}^\theta$ and $x_l$, or 
equivalently\footnote{Note when $\la a_r,x_l\ra=0$, any $\theta\in[0,2\pi)$ 
minimizes $\ln x_{l+1}^\theta-x_l\rn_2^2$, in which case we will set $\theta_l=0$ as is typical in the literature.}
\begin{equation*}
\theta_l = \argmin_\theta\ln x_{l+1}^\theta-x_l\rn_2.
\end{equation*}
Therefore $x_{l+1}$ is indeed the projection of $x_l$ onto the set of hyperplanes
\begin{equation*}
\lcb x:~\left|\la a_r,x\ra\right| = \sqrt{y_r}\rcb,
\end{equation*}
and Alg.~\ref{alg:simp_kacz} can be viewed as a non-convex Kaczmarz method. 
\subsection{The block Kaczmarz method}\label{sec:block_kacz}
\begin{algorithm}[htp]
\caption{Block Kaczmarz}\label{alg:block_kacz}

\begin{algorithmic}
\STATE\textbf{Initilization}: $x_0$, partition $T=\lcb \Gamma_1,\cdots,\Gamma_{N_b}\rcb$ of the row indices $\{1,\cdots,m\}$
\FOR{$l=0,1,\cdots,$}
\STATE 1. select a block $\Gamma_r$ from $T$ either in a deterministic manner or randomly
\STATE 2. $\theta_l=\angle A_{\Gamma_r} x_l$, where the angle is computed for each entry of $A_{\Gamma_r} x_l$
\STATE 3. $x_{l+1} = x_l+A_{\Gamma_r}^\dagger\lb \sqrt{y_{\Gamma_r}}\odot e^{i\theta_l}-A_{\Gamma_r}x_l\rb$
\ENDFOR
\end{algorithmic}
\end{algorithm}
The block Kaczmarz method for phase retrieval (Alg.~\ref{alg:block_kacz}) begins
with a partition of the measurement matrix into a number of blocks. In each iteration, it firstly selects a block, denoted by $\Gamma_r$\footnote{In the block Kaczmarz method,  we assume that the row submatrix $A_{\Gamma_r}$ is fat (i.e., $|\Gamma_r|\leq n$) following the literature of the block Kaczmarz method for linear equations; though without this assumption ER  can be viewed as a special instance of the block Kaczmarz method with only
one block.}, either 
deterministically or randomly  and then projects the current estimate $x_l$ onto the intersections of the hyperplanes determined
by
\begin{equation*}
\lcb x:~A_{\Gamma_r} x=y_{\Gamma_r}\odot e^{i\theta_l}\rcb, ~\mbox{ with }~ \theta_l=\angle A_{\Gamma_r} x_l.
\end{equation*}
The pseudo-inverse in the third step of Alg.~\ref{alg:block_kacz} returns a solution of minimum $\ell_2$ norm to 
an underdetermined least squares problem.
The block Kaczmarz method for phase retrieval has the following property. 
\begin{proposition}\label{prop1}
Let $A_{\Gamma_i}$ and $A_{\Gamma_j}$ be two block submatrices of $A$ and $A_{\Gamma_i\cup\Gamma_j} = \begin{bmatrix}A_{\Gamma_i}\\A_{\Gamma_j}\end{bmatrix}$  be their concatenation. Assume $\rank(A_{\Gamma_i\cup\Gamma_j})=|\Gamma_i|+|\Gamma_j|\leq n$ and $A_{\Gamma_i}^*A_{\Gamma_j}=0$. 
Then applying two iterations of the block Kaczmarz method to the blocks $\Gamma_i$ and $\Gamma_j$ one after another is equivalent to applying one iteration of the block Kaczmarz 
method to the block $\Gamma_{i}\cup\Gamma_j$.
\end{proposition}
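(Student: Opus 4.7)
The plan is to put both iterates in closed form and match them term by term, invoking the orthogonality hypothesis in two places: once to factor the pseudo-inverse of the merged block, and once to show that the phase re-selection in the second sub-step does not drift. Abbreviate $B = A_{\Gamma_i}$, $C = A_{\Gamma_j}$, and $M = A_{\Gamma_i\cup\Gamma_j}$; the hypothesis gives $BC^* = 0$ and the rank condition makes $M$ full row rank so that $M^\dagger = M^*(MM^*)^{-1}$. A direct calculation shows $MM^* = \mathrm{diag}(BB^*,CC^*)$, and expanding the inverse yields the clean factorisation $M^\dagger = [B^\dagger \;\; C^\dagger]$, so the one-step iterate applied to $\Gamma_i\cup\Gamma_j$ splits into two independent corrections with phases coming from the partition $\angle Mx_l = (\angle Bx_l,\angle Cx_l)$.

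The main, and only genuinely non-routine, step is to verify phase consistency in the two-step iteration. Writing the first sub-step as $x_{l+1}^{(1)} = x_l + B^\dagger\bigl(\sqrt{y_{\Gamma_i}} \odot e^{i\theta^{(1)}} - Bx_l\bigr)$, multiplication by $C$ produces a correction proportional to $CB^\dagger = CB^*(BB^*)^{-1}$, which vanishes by orthogonality. Hence $Cx_{l+1}^{(1)} = Cx_l$, so the phase $\theta^{(2)} = \angle Cx_{l+1}^{(1)}$ chosen in the second sub-step equals $\angle Cx_l$, which is exactly the lower half of the phase vector used in the one-step iterate. Without the orthogonality assumption the non-convex phase heuristic would select a different $\theta^{(2)}$ and the two procedures would disagree, so this is the crucial place where the hypothesis is needed beyond standard block-Kaczmarz bookkeeping.

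Collecting the results, the two-step iterate becomes $x_l + B^\dagger(\sqrt{y_{\Gamma_i}} \odot e^{i\theta^{(1)}} - Bx_l) + C^\dagger(\sqrt{y_{\Gamma_j}} \odot e^{i\theta^{(2)}} - Cx_l)$, after simplifying the residual in the second summand via $Cx_{l+1}^{(1)} = Cx_l$. By the factorisation $M^\dagger = [B^\dagger\;C^\dagger]$ established above, this expression is exactly the one-step iterate applied to $\Gamma_i\cup\Gamma_j$, finishing the argument.
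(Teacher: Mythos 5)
Your proposal is correct and follows essentially the same route as the paper's proof: factor the merged pseudo-inverse as $A_{\Gamma_i\cup\Gamma_j}^\dagger=\begin{bmatrix}A_{\Gamma_i}^\dagger & A_{\Gamma_j}^\dagger\end{bmatrix}$ using the row-orthogonality of the blocks, and then use $A_{\Gamma_j}A_{\Gamma_i}^\dagger=0$ to show that the second sub-step's phase selection is unchanged ($A_{\Gamma_j}x_1=A_{\Gamma_j}x_0$), so the two expressions coincide term by term. You also correctly read the hypothesis as $A_{\Gamma_i}A_{\Gamma_j}^*=0$ (mutually orthogonal rows), which is the dimensionally consistent form the paper actually uses in its argument.
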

\begin{proof}
Let $x_0\in\C^n$ be an arbitrary point. Two iterations of the block Kaczmarz method applied to the blocks $\Gamma_i$ and $\Gamma_j$ gives
\begin{eqnarray*}
&&x_1 = x_0 + A_{\Gamma_i}^\dagger\lb\sqrt{y_{\Gamma_i}}\odot e^{i\angle A_{\Gamma_i} x_0}-A_{\Gamma_i}x_0\rb\\
&&x_2 = x_1 + A_{\Gamma_j}^\dagger\lb\sqrt{y_{\Gamma_j}}\odot e^{i\angle A_{\Gamma_j} x_1}-A_{\Gamma_j}x_1\rb.
\end{eqnarray*}
One iteration of the block Kaczmarz method to the block $\Gamma_{i}\cup\Gamma_j$ gives
\begin{equation*}
x_{12} = x_0 + A_{\Gamma_i\cup\Gamma_j}^\dagger\lb\sqrt{y_{\Gamma_i\cup\Gamma_j}}\odot e^{i\angle A_{\Gamma_i\cup\Gamma_j} x_0}-A_{\Gamma_i\cup\Gamma_j}x_0\rb.
\end{equation*}
From the condition $A_{\Gamma_i}A_{\Gamma_j}^*=0$, we have 
\begin{equation*}
A_{\Gamma_i}A_{\Gamma_j}^\dagger=A_{\Gamma_i}A_{\Gamma_j}^*(A_{\Gamma_j}A_{\Gamma_j})^{-1}=0\mbox{ and }A_{\Gamma_j}A_{\Gamma_i}^\dagger=A_{\Gamma_j}A_{\Gamma_i}^*(A_{\Gamma_i}A_{\Gamma_i})^{-1}=0,
\end{equation*}
which leads to
\begin{equation*}
A_{\Gamma_i\cup\Gamma_j}^\dagger = \begin{bmatrix} A_{\Gamma_i}^\dagger&A_{\Gamma_j}^\dagger\end{bmatrix}.
\end{equation*}
Therefore
\begin{eqnarray*}
x_{12} &=&x_0 + A_{\Gamma_i\cup\Gamma_j}^\dagger\lb\sqrt{y_{\Gamma_i\cup\Gamma_j}}\odot e^{i\angle A_{\Gamma_i\cup\Gamma_j} x_0}-A_{\Gamma_i\cup\Gamma_j}x_0\rb\\
&=&x_0+\begin{bmatrix} A_{\Gamma_i}^\dagger&A_{\Gamma_j}^\dagger\end{bmatrix}\lb
\begin{bmatrix}\sqrt{y_{\Gamma_i}}\odot e^{i\angle A_{\Gamma_i} x_0}\\
\sqrt{y_{\Gamma_j}}\odot e^{i\angle A_{\Gamma_j}x_0} \end{bmatrix}-\begin{bmatrix}A_{\Gamma_i}x_0\\A_{\Gamma_j}x_0\end{bmatrix}\rb\\
&=&x_0 + A_{\Gamma_i}^\dagger\lb\sqrt{y_{\Gamma_i}}\odot e^{i\angle A_{\Gamma_i} x_0}-A_{\Gamma_i}x_0\rb\\
&&+A_{\Gamma_j}^\dagger\lb\sqrt{y_{\Gamma_j}}\odot e^{i\angle A_{\Gamma_j} x_0}-A_{\Gamma_j}x_0\rb\\
&=&x_1+A_{\Gamma_j}^\dagger\lb\sqrt{y_{\Gamma_j}}\odot e^{i\angle A_{\Gamma_j} x_0}-A_{\Gamma_j}x_0\rb\\
&=&x_1+A_{\Gamma_j}^\dagger\lb\sqrt{y_{\Gamma_j}}\odot e^{i\angle A_{\Gamma_j} x_1}-A_{\Gamma_j}x_1\rb=x_2,
\end{eqnarray*}
where the second to last equality follows from the fact $A_{\Gamma_j}x_0=A_{\Gamma_j}x_1$ since $A_{\Gamma_j}A_{\Gamma_i}^\dagger=0$.
\end{proof}
Proposition~\eqref{prop1} implies that if the  submatrix $A_{\Gamma_r}$ consists of $|\Gamma_r|$ orthogonal rows, then one iteration of the block Kaczmarz method 
is 
equivalent to $|\Gamma_r|$ iterations of the simple Kaczmarz method. 
 In the Kaczmarz methods for linear equations, it is trivial that successive projections onto a set of orthogonal affine spaces returns the projection onto the intersection of the spaces. Proposition~\eqref{prop1}
confirms that this property still holds  when applying the Kaczmarz methods to solve systems of quadratic equations, without being influenced by the phase selection heuristic.

A natural question arises 
following the discussion in section~\ref{sec:simp_kacz}: If $|\Gamma_r|>1$, whether $x_{l+1}$ is the projection of $x_l$ onto the non-convex set
\begin{equation}\label{eq:proj_set}
\lcb x:~\labs A_{\Gamma_r}x\rabs=\sqrt{y_{\Gamma_r}}\rcb?
\end{equation}
Since for an arbitrary phase factor $e^{i\theta}$, the projection of $x_l$ onto the set
$
\lcb x:~ A_{\Gamma_r}x=\sqrt{y_{\Gamma_r}}e^{i\theta}\rcb
$
is given by $x_{l+1}^\theta = x_l+A_{\Gamma_r}^\dagger\lb \sqrt{y_{\Gamma_r}}\odot e^{i\theta}-A_{\Gamma_r}x_l\rb$, 
this question can be reformulated as whether 
\begin{equation}\label{eq:proj_dist}
\min_\theta\ln A_{\Gamma_r}^\dagger\lb \sqrt{y_{\Gamma_r}}\odot e^{i\theta}-A_{\Gamma_r}x_l\rb\rn_2
\end{equation}
attains its minimum at $\theta_l$. Unfortunately the answer is negative unless $A^\dagger_{\Gamma_r}$ has 
some special structures, such as $A^\dagger_{\Gamma_r}$ is a column orthonormal matrix in which case we have 
\begin{equation*}
\ln A_{\Gamma_r}^\dagger\lb \sqrt{y_{\Gamma_r}}\odot e^{i\theta}-A_{\Gamma_r}x_l\rb\rn_2 = \ln \sqrt{y_{\Gamma_r}}\odot e^{i\theta}-A_{\Gamma_r}x_l\rn_2.
\end{equation*}
A simple counterexample in $\R^2$ is given below to support this argument. 
\begin{example}\label{example}
Let 
\begin{equation*}
A^\dagger_{\Gamma_r} = \begin{bmatrix}
2 & 1\\
1 & 0
\end{bmatrix},~
\sqrt{y_{\Gamma_r}}=\begin{bmatrix}
2\\1
\end{bmatrix},~
A_{\Gamma_r} x_l = \begin{bmatrix}
1\\ 1
\end{bmatrix}.
\end{equation*}
Then $e^{i\theta_l}=\sign(A_{\Gamma_r}x_l)=\begin{bmatrix}1&1\end{bmatrix}^T$ and the objective function of
\eqref{eq:proj_dist} evaluated at $e^{i\theta_l}$ is $\sqrt{5}$, which is clearly larger than $1$ when $e^{i\theta}=\begin{bmatrix}1&-1\end{bmatrix}^T$.
\end{example}
Let $u=e^{i\theta}$. Then \eqref{eq:proj_dist} can be further reformulated as a non-convex optimization problem
\begin{equation*}
\min_u\ln A_{\Gamma_r}^\dagger\lb \sqrt{y_{\Gamma_r}}\odot u-A_{\Gamma_r}x_l\rb\rn_2~\subto~|u|=1.
\end{equation*}
 So in general computing the  solution to \eqref{eq:proj_dist} is not tractable.
However, the following theorem bounds the deviation of $x_{l+1}$ from the optimal projection of $x_l$ onto the feasible set \eqref{eq:proj_set} in terms of the condition number 
of the submatrix.
\begin{theorem}[Projection Error Analysis]\label{prop2}
Let $\widehat{\theta}$ be the minimum of \eqref{eq:proj_dist} and $x_{l+1}^{\widehat{\theta}}$ be the corresponding projection of $x_l$. The difference 
between $x_{l+1}^{\widehat{\theta}}$ and $x_{l+1}$ can be bounded as 
\begin{equation}\label{eq:diff_to_optimal}
\ln x_{l+1}^{\widehat{\theta}}-x_{l+1}\rn_2\leq \kappa\lb A_{\Gamma_r}\rb \ln \diag\lb e^{i\widehat{\theta}}-e^{i\theta_l}\rb\rn_\infty\ln\hx\rn_2,
\end{equation}
where $\kappa\lb A_{\Gamma_r}\rb=\frac{\sigma_{\max}\lb A_{\Gamma_r}\rb}{\sigma_{\min}\lb A_{\Gamma_r}\rb}$ denotes the condition number of $A_{\Gamma_r}$, and $\hx$
is a solution to the phase retrieval problem \eqref{eq:problem_setup}.
\end{theorem}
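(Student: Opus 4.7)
The plan is to write the difference $x_{l+1}^{\widehat\theta}-x_{l+1}$ as the image under $A_{\Gamma_r}^\dagger$ of a simple vector, factor that vector as a diagonal matrix times $\sqrt{y_{\Gamma_r}}$, and then apply submultiplicativity of the spectral norm together with the identification $y_{\Gamma_r}=|A_{\Gamma_r}\hx|^2$.

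First I would subtract the two update formulas. Because both $x_{l+1}^{\widehat\theta}$ and $x_{l+1}=x_{l+1}^{\theta_l}$ have the form $x_l+A_{\Gamma_r}^\dagger(\sqrt{y_{\Gamma_r}}\odot e^{i\theta}-A_{\Gamma_r}x_l)$, the $x_l$ and $A_{\Gamma_r}x_l$ terms cancel and one is left with
\begin{equation*}
x_{l+1}^{\widehat\theta}-x_{l+1}=A_{\Gamma_r}^\dagger\bigl(\sqrt{y_{\Gamma_r}}\odot e^{i\widehat\theta}-\sqrt{y_{\Gamma_r}}\odot e^{i\theta_l}\bigr)=A_{\Gamma_r}^\dagger\,\diag\bigl(e^{i\widehat\theta}-e^{i\theta_l}\bigr)\sqrt{y_{\Gamma_r}}.
\end{equation*}
Taking $\ell_2$-norms and using $\|A_{\Gamma_r}^\dagger\|_2=1/\sigma_{\min}(A_{\Gamma_r})$ together with the fact that the spectral norm of a diagonal matrix equals its $\infty$-norm, I get
\begin{equation*}
\ln x_{l+1}^{\widehat\theta}-x_{l+1}\rn_2\leq \frac{1}{\sigma_{\min}(A_{\Gamma_r})}\,\ln\diag(e^{i\widehat\theta}-e^{i\theta_l})\rn_\infty\,\ln\sqrt{y_{\Gamma_r}}\rn_2.
\end{equation*}

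The final ingredient is to relate $\|\sqrt{y_{\Gamma_r}}\|_2$ to $\|\hx\|_2$. Since $\hx$ solves the phase retrieval problem, $y_r=|\la a_r,\hx\ra|^2$, so $\|\sqrt{y_{\Gamma_r}}\|_2^2=\sum_{r\in\Gamma_r}|\la a_r,\hx\ra|^2=\|A_{\Gamma_r}\hx\|_2^2\leq \sigma_{\max}^2(A_{\Gamma_r})\|\hx\|_2^2$. Substituting this in and recognising $\sigma_{\max}(A_{\Gamma_r})/\sigma_{\min}(A_{\Gamma_r})=\kappa(A_{\Gamma_r})$ yields \eqref{eq:diff_to_optimal}.

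There is no real obstacle here — the argument is essentially a spectral-norm bookkeeping exercise. The only subtle choice is how to split the product $\sqrt{y_{\Gamma_r}}\odot(e^{i\widehat\theta}-e^{i\theta_l})$: factoring out the phase difference as the diagonal matrix (rather than absorbing $\sqrt{y_{\Gamma_r}}$ into the diagonal) is what forces the bound to involve $\|\cdot\|_\infty$ of the phase discrepancy and keeps the magnitude information in a quantity that can be controlled by $\sigma_{\max}(A_{\Gamma_r})\|\hx\|_2$, producing exactly the condition number on the right-hand side.
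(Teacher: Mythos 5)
Your proof is correct and follows essentially the same route as the paper: both write the difference as $A_{\Gamma_r}^\dagger\diag\lb e^{i\widehat{\theta}}-e^{i\theta_l}\rb$ applied to $\sqrt{y_{\Gamma_r}}=\labs A_{\Gamma_r}\hx\rabs$ and then bound by submultiplicativity, with $\ln A_{\Gamma_r}^\dagger\rn_2\ln A_{\Gamma_r}\rn_2=\kappa\lb A_{\Gamma_r}\rb$. The only difference is cosmetic — you substitute $\sqrt{y_{\Gamma_r}}=\labs A_{\Gamma_r}\hx\rabs$ after taking norms rather than before.
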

\begin{proof} The result follows from the calculation:
\begin{eqnarray}
\ln x_{l+1}^{\widehat{\theta}}-x_{l+1}\rn_2 &=&\ln A_{\Gamma_r}^\dagger\lb \sqrt{y_{\Gamma_r}}\odot\lb e^{i\wht}-e^{i\theta_l}\rb\rb\rn_2\nonumber\\
&=&\ln A_{\Gamma_r}^\dagger\diag\lb e^{i\wht}-e^{i\theta_l}\rb \labs A_{\Gamma_r}\hx\rabs\rn_2\nonumber\\
&\leq&\ln A_{\Gamma_r}^\dagger\rn_2\cdot \ln \diag\lb e^{i\wht}-e^{i\theta_l}\rb\rn_\infty\cdot \ln A_{\Gamma_r}\rn_2\cdot\ln\hx\rn_2\nonumber\\
&=&\kappa\lb A_{\Gamma_r}\rb \ln \diag\lb e^{i\wht}-e^{i\theta_l}\rb\rn_\infty\ln\hx\rn_2,\nonumber
\end{eqnarray}
where the second equality follows from the fact $|A_{\Gamma_r}\hx|=\sqrt{y_{\Gamma_r}}$ and $\diag\lb e^{i\wht}-e^{i\theta_l}\rb$ denotes a diagonal matrix with the diagonal entries being the vector $e^{i\wht}~-~e^{i\theta_l}$.
\end{proof}
\subsection{Convergence results for the randomized Kaczmarz methods}
Let $\widehat{x}\in\C^n$ be a solution to the phase retrieval problem \eqref{eq:problem_setup}. For any $x\in\C^n$, the
distance of $x$ to $\widehat{x}$ is defined as
\begin{equation*}
\dist(x,\hx)=\min_{\theta\in[0,2\pi)}\ln x-\hx e^{i\theta}\rn_2.
\end{equation*}
As previously stated in section~\ref{sec:kacz_linear}, the randomized Kaczmarz methods for linear equations have been well-studied recently and
the corresponding convergence results only depends on the quantities of the matrix which are widely used in  numerical linear algebra. 
In this section, we will present preliminary convergence analysis of the randomized Kaczmarz methods for phase retrieval. For ease of exposition, we assume the 
measurement matrix $A$ is {\em standardized}, that is each row of $A$ has unit $\ell_2$ norm. In addition, we assume the rows or submatrices are selected uniformly at random in each iteration.
\begin{theorem}\label{thm:simple}
Let $A\in\C^{m\times n}$ be a standardized matrix with full column rank and $y=|A\hx|^2$ for some $\hx\in\C^n$. For an arbitrary initial estimate $x_0$, the iterates $x_l ~(l\geq 0)$ produced by the randomized simple
Kaczmarz method satisfy
\begin{equation}
\E\lsb\dist^2(x_l,\hx)\rsb
\leq\lb 1-\frac{\sigma^2_{\min}(A)}{m}\rb^l\dist^2(x_0,\hx)+\frac{4m}{\sigma^2_{\min}(A)}\ln y\rn_\infty.
\end{equation}
\end{theorem}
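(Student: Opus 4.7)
The plan is to prove a one-step conditional contraction of the form
\begin{equation*}
\E\lsb\ln x_{l+1}-\hx\rn_2^2\,\bigm|\,x_l\rsb\leq\lb 1-\frac{\sigma_{\min}^2(A)}{m}\rb\ln x_l-\hx\rn_2^2+4\ln y\rn_\infty,
\end{equation*}
and then iterate. Because $\dist^2(\cdot,\hx)$ is invariant under replacing $\hx$ by $\hx e^{i\phi}$, I would first rotate $\hx$ so that $\ln x_0-\hx\rn_2=\dist(x_0,\hx)$; the constraint $y=|A\hx|^2$ is preserved by this rotation, and the inequality $\dist(x_l,\hx)\leq\ln x_l-\hx\rn_2$ at each subsequent step then upgrades the bound on $\E\lsb\ln x_l-\hx\rn_2^2\rsb$ into the stated bound on $\E\lsb\dist^2(x_l,\hx)\rsb$.

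For the one-step calculation, fix an iteration $l$, let $a=a_r$ (with $\ln a\rn_2=1$) be the sampled row, and set $e_l=x_l-\hx$, $\theta_l=\angle\la a,x_l\ra$, $\wht=\angle\la a,\hx\ra$, so $\la a,\hx\ra=\sqrt{y_r}\,e^{i\wht}$. Substituting into the Kaczmarz update gives
\begin{equation*}
x_{l+1}-\hx=\bigl(e_l-\la a,e_l\ra a\bigr)+\sqrt{y_r}\,(e^{i\theta_l}-e^{i\wht})\,a.
\end{equation*}
The first summand is orthogonal to $a$, so Pythagoras yields the clean identity
\begin{equation*}
\ln x_{l+1}-\hx\rn_2^2=\ln e_l\rn_2^2-|\la a,e_l\ra|^2+y_r|e^{i\theta_l}-e^{i\wht}|^2.
\end{equation*}
From here the one-step contraction follows by the crude bound $|e^{i\theta_l}-e^{i\wht}|^2\leq 4$, the trivial $y_r\leq\ln y\rn_\infty$, taking expectation over $r$ uniform on $\{1,\ldots,m\}$, and invoking $\tfrac{1}{m}\ln Ae_l\rn_2^2\geq\tfrac{\sigma_{\min}^2(A)}{m}\ln e_l\rn_2^2$ (which uses the full-column-rank hypothesis). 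Iterating the contraction and summing the geometric series $\sum_{k\geq 0}\bigl(1-\sigma_{\min}^2(A)/m\bigr)^k=m/\sigma_{\min}^2(A)$ produces the claimed bound.

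The substantive observation, and the only place where phase retrieval differs from the classical Strohmer--Vershynin analysis, is that the orthogonal-projection cancellation survives the phase-selection heuristic: the spurious direction $\sqrt{y_r}(e^{i\theta_l}-e^{i\wht})a$ is parallel to $a$ and hence orthogonal to the Strohmer--Vershynin contraction direction, so the contraction coefficient $1-\sigma_{\min}^2(A)/m$ is identical to the linear case. The price paid for the heuristic is the additive term $y_r|e^{i\theta_l}-e^{i\wht}|^2$, bounded crudely by $4y_r$, which is the sole source of looseness and the sole reason the bound does not decay to zero. Sharpening it would require controlling $|e^{i\theta_l}-e^{i\wht}|$ in terms of $\ln e_l\rn_2$, which is only plausible inside a local basin around $\hx$ and is the main obstacle to upgrading this preliminary estimate to a genuine linear-convergence theorem.
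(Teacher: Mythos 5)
Your proof is correct, and it reaches the same one-step recursion as the paper, but by a more self-contained route. The paper does not compute the orthogonal decomposition itself: it rewrites the iteration as one step of randomized Kaczmarz applied to the inconsistent system $Ax=\sqrt{y}\odot e^{i\theta_{l-1}}$, whose consistent part is $Ax=\sqrt{y}\odot e^{i\psi_{l-1}}$ with solution $\hx e^{i\phi_{l-1}}$, and then invokes Needell's lemma for inconsistent systems with noise magnitude $R=2\max_r\sqrt{y_r}$, so $R^2=4\ln y\rn_\infty$ as in your bound. Your explicit identity $\ln x_{l+1}-\hx\rn_2^2=\ln e_l\rn_2^2-\labs\la a,e_l\ra\rabs^2+y_r\labs e^{i\theta_l}-e^{i\wht}\rabs^2$ is exactly the content of that lemma in this setting (the ``noise'' lies along $a$, the contracted residual is orthogonal to $a$), so you have in effect reproved the cited lemma inline, which makes the argument elementary and also makes visible precisely where the additive term comes from. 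The second difference is the treatment of the global phase: the paper re-optimizes the phase at every iteration, setting $\phi_{l-1}=\argmin_\theta\ln x_{l-1}-\hx e^{i\theta}\rn_2$, which yields a recursion directly in $\E\lsb\dist^2(x_l,\hx)\rsb$; you instead fix a single rotation of $\hx$ aligned with $x_0$ and only pass to $\dist$ at the end via $\dist(x_l,\hx)\leq\ln x_l-\hx\rn_2$. Both give the stated bound; the paper's per-iteration alignment is the marginally sharper intermediate statement (each step contracts the genuinely smaller quantity $\dist^2(x_{l-1},\hx)$ rather than the fixed-phase error), while yours avoids having to track a changing target $\hx e^{i\phi_{l-1}}$ across iterations. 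Your closing remark about where the looseness lies matches the paper's own discussion after the theorem and in the conclusion.
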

Basically, theorem~\ref{thm:simple} states that for any given initial point, the exponential convergence of the  randomized simple Kacmarz method can be guaranteed until the 
iterate reaches a neighbourhood of $\hx$. However, the size of the  neighbourhood is quite pessimistic. 
 For example, if $A$ is a matrix whose rows are independent spherical random vectors, $\sigma_{\min}^2(A)$ will be proportional to $m/n$ \cite{nonasym_rand} and the size of the neighbourhood is proportional to the size of the measured vector.  
The proof of theorem~\ref{thm:simple} will be deferred to section~\ref{sec:proof}.

To state a similar result for the randomized block Kaczmarz method, we need the following definition introduced in \cite{NeTr12a}. 
\begin{definition}
An $(N_b,\alpha,\beta)$ row paving of a matrix $A$ is a partition $T=\{\Gamma_1,\cdots,\Gamma_{N_b}\}$ of the row indices that satisfies
\begin{equation*}
\alpha\leq\sigma^2_{\min}(A_{\Gamma_r})\leq \sigma^2_{\max}(A_{\Gamma_r})\leq \beta ~\mbox{ for each }\Gamma_r\in T.
\end{equation*}
\end{definition}
With this definition, we have the following property for the randomized block Kaczmarz method.
\begin{theorem}\label{thm:block}
Let $A\in\C^{m\times n}$ be a standardized matrix with full column rank and $y=|A\hx|^2$ for some $\hx\in\C^n$. For an arbitrary initial estimate $x_0$, the iterates $x_l ~(l\geq 0)$ produced by the randomized block
Kaczmarz method satisfy
\begin{equation}
\E\lsb\dist^2(x_l,\hx)\rsb
\leq\lb 1-\frac{\sigma^2_{\min}(A)}{\beta N_b}\rb^l\dist^2(x_0,\hx)+\frac{4\beta}{\alpha\sigma^2_{\min}(A)}\ln y\rn_1.
\end{equation}
\end{theorem}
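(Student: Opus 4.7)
The plan is to mirror the argument for Theorem~\ref{thm:simple}, using the $(N_b,\alpha,\beta)$ row paving in place of the row-norm bookkeeping of the simple case. Fix an iteration $l$, let $\phi_l=\argmin_\theta\ln x_l-\hx e^{i\theta}\rn_2$ and set $\hx_l=\hx e^{i\phi_l}$, so that $\dist(x_l,\hx)=\ln x_l-\hx_l\rn_2$ and $|A_{\Gamma_r}\hx_l|=\sqrt{y_{\Gamma_r}}$ for every block $\Gamma_r$. Writing $z_l=\sqrt{y_{\Gamma_r}}\odot e^{i\theta_l}$ and adding and subtracting $A_{\Gamma_r}^\dagger A_{\Gamma_r}\hx_l$ in the update rule of Alg.~\ref{alg:block_kacz}, I would first express the residual as
$$x_{l+1}-\hx_l=(I-A_{\Gamma_r}^\dagger A_{\Gamma_r})(x_l-\hx_l)+A_{\Gamma_r}^\dagger(z_l-A_{\Gamma_r}\hx_l).$$

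Next I would invoke the Pythagorean identity, since the first summand lies in $\mathrm{null}(A_{\Gamma_r})$ while the second lies in $\mathrm{range}(A_{\Gamma_r}^*)$. For the null-space piece I would use that $P_r:=A_{\Gamma_r}^\dagger A_{\Gamma_r}$ is an orthogonal projector and that $A_{\Gamma_r}v=A_{\Gamma_r}P_rv$ for every $v$, together with $\sigma_{\max}^2(A_{\Gamma_r})\leq\beta$, to obtain
$$\ln(I-P_r)(x_l-\hx_l)\rn_2^2\leq\ln x_l-\hx_l\rn_2^2-\frac{1}{\beta}\ln A_{\Gamma_r}(x_l-\hx_l)\rn_2^2.$$
For the row-space piece I would combine $\ln A_{\Gamma_r}^\dagger\rn_2^2\leq 1/\alpha$ with the phase-selection identity $|A_{\Gamma_r}\hx_l|=\sqrt{y_{\Gamma_r}}$ and the elementary bound $|e^{i\theta}-e^{i\psi}|^2\leq 4$ to conclude that $\ln A_{\Gamma_r}^\dagger(z_l-A_{\Gamma_r}\hx_l)\rn_2^2\leq (4/\alpha)\ln y_{\Gamma_r}\rn_1$.

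Adding the two bounds gives a one-step estimate for $\ln x_{l+1}-\hx_l\rn_2^2$. I would then take expectation over the uniform choice of the block, which collapses $\frac{1}{N_b}\sum_r\ln A_{\Gamma_r}(x_l-\hx_l)\rn_2^2$ to $\frac{1}{N_b}\ln A(x_l-\hx_l)\rn_2^2$ and $\frac{1}{N_b}\sum_r\ln y_{\Gamma_r}\rn_1$ to $\frac{1}{N_b}\ln y\rn_1$; invoking $\ln A(x_l-\hx_l)\rn_2^2\geq\sigma_{\min}^2(A)\ln x_l-\hx_l\rn_2^2$ and the trivial bound $\dist^2(x_{l+1},\hx)\leq\ln x_{l+1}-\hx_l\rn_2^2$ produces a contractive recursion with rate $1-\sigma_{\min}^2(A)/(\beta N_b)$ and additive term $4\ln y\rn_1/(\alpha N_b)$. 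Iterating this recursion and summing the resulting geometric series, whose tail is bounded by $\beta N_b/\sigma_{\min}^2(A)$, yields the advertised bound.

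The hard part is the Pythagorean step. One must notice that the contractive rate has to come from the \emph{upper} paving bound $\sigma_{\max}^2(A_{\Gamma_r})\leq\beta$ rather than from $\sigma_{\min}^2$: the mass lost by $I-P_r$ equals $\ln P_r(x_l-\hx_l)\rn_2^2$, and lower bounding this in terms of $\ln A_{\Gamma_r}(x_l-\hx_l)\rn_2^2$ forces a division by the largest singular value of the block. The smallest singular value enters only through $\ln A_{\Gamma_r}^\dagger\rn_2^2\leq 1/\alpha$ in the error term. Getting both singular-value inequalities pointed in the right direction, so that $\alpha$ and $\beta$ combine as $\beta/\alpha$ in the noise floor, is the one place where the block analysis differs qualitatively from the proof of Theorem~\ref{thm:simple}.
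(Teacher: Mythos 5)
Your proposal is correct and follows essentially the route the paper intends: the paper omits this proof, stating only that it ``follows the proof of theorem~\ref{thm:simple},'' i.e.\ rewrite the iteration as one step of the randomized block Kaczmarz method on the inconsistent system $Ax=\sqrt{y}\odot e^{i\theta_l}$, compare against the consistent system solved by $\hx e^{i\phi_l}$ so that the phase mismatch plays the role of noise bounded via $|e^{i\theta}-e^{i\psi}|\leq 2$, and sum the geometric series. The one place you go beyond the paper is that you prove the per-iteration contraction yourself (orthogonal decomposition along $\mathrm{null}(A_{\Gamma_r})$ and $\mathrm{range}(A_{\Gamma_r}^*)$, with $\beta$ entering through $\|P_r v\|_2^2\geq\|A_{\Gamma_r}v\|_2^2/\sigma_{\max}^2(A_{\Gamma_r})$ and $\alpha$ through $\|A_{\Gamma_r}^\dagger\|_2^2\leq 1/\alpha$) rather than citing the noisy block Kaczmarz lemma of Needell and Tropp as a black box; the constants you obtain, $1-\sigma_{\min}^2(A)/(\beta N_b)$ and $4\beta\|y\|_1/(\alpha\sigma_{\min}^2(A))$, match the theorem exactly.
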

The proof of theorem~\ref{thm:block} follows the proof of theorem~\ref{thm:simple} and is omitted for brevity. In \cite{NeTr12a}, the authors review several 
approaches of constructing good pavings such that the condition numbers of all the submatrices are uniformly small. From theorem~\ref{prop2}, we can see that the paving of $A$ also
has an effect on the approximations of the optimal projections.
\section{Numerical experiments}\label{sec:numerics}
In this section, we present the empirical observations of the Kaczmarz methods as compared 
with  ER (Alg.~\ref{alg:gesa}) and  Wirtinger Flow (Alg.~\ref{alg:wirtinger}).
ER and Wirtinger Flow are selected due to their applicability for signals without assuming any a priori knowledge,  flexibility in terms of the types of  measurements,
and  reported efficiency as greedy algorithms. 
We only present the empirical results of the deterministic 
Kaczmarz methods which go through the rows or partitions of the measurement matrix cyclically. The randomized 
Kaczmarz methods which select the rows or submatrices with the probability proportional to their sizes exhibit similar
performance in our test cases.  All the tested algorithms are implemented in Matlab with the code for Wirtinger Flow
downloaded from the author's website. The tests are conducted on a desktop computer with 4-core 3.2GHz processors and 8GB memory\footnote{The codes and data to reproduce all the figures and tables in the manuscript can be downloaded from {\tt https://github.com/makwei/phase-kacz}.}.
\subsection{Implementation details}
\subsubsection{Termination conditions}
ER and Wirtinger Flow will be terminated after they reach a maximum number of iterations or the relative residual is small, that is
\begin{equation}\label{eq:rel_res}
\frac{\ln |Ax_l|^2-y\rn_2}{\ln y\rn_2}\leq\epsilon_1
\end{equation}
for some $\epsilon_1>0$.
For the Kaczmarz methods, we define a {\em cycle} as the number of iterations the algorithms  take to touch each row of the measurement matrix once. So a cycle consists of $m$ iterations for the simple Kaczmarz method and $N_b$ iterations for the block Kaczmarz method. The Kaczmarz methods will be terminated after they reach a maximum number of cycles. To verify the 
relative residual criterion \eqref{eq:rel_res} in each iteration is somewhat expensive for the Kaczmarz methods as it requires a matrix-vector product involving the full matrix $A$. The typical approach is to evaluate it after a fixed number of cycles.  However, in our implementations, we will check whether 
\begin{equation}\label{eq:max_atom_res}
\max\frac{\labs|a_r^*x_l|^2-y_r\rabs}{|y_r|}\leq\epsilon_2~\mbox{ or }~\max\frac{\ln |A_{\Gamma_r}x_l|^2-y_{\Gamma_r} \rn_2}{\ln y_{\Gamma_r}\rn_2}\leq\epsilon_2
\end{equation}
is satisfied or not after each cycle for some $\epsilon_2>0$, where the maximum is taken over a cycle of iterations.  Note that $x_l$ is the iterate before the projection, so $|a_r^*x_l|^2\neq y_r$ and 
$|A_{\Gamma_r}x_l|^2\neq y_{\Gamma_r}$. 
\subsubsection{Solving the least squares subproblems}
For the unstructured measurement matrix, one can apply either direct methods or iterative methods
to solve the overdetermined least squares subprobblems in ER and the underdetermined least squares subproblems in the block Kaczmarz methods, depending
on the sizes and conditions of the matrices or submatrices. In our tests for the Gaussian measurement matrix, we use Matlab {\tt pcg} with warm starting to solve the 
overdetermined least squares subproblems in ER and Matlab {\tt backslash} to solve the underdetermined least squares subproblems in the block Kaczmarz methods.
\subsection{Random experiments}\label{sec:rand_tests}
\subsubsection{1D set-up}
The measured vectors $\hx$ are drawn from the Gaussian distribution, that is $\hx~\sim~\N(0,I_n)$ 
when $\hx\in\R^n$ and $\hx\sim\N(0,I_n)+i\N(0,I_n)$ when $\hx\in\C^n$. The algorithms are tested 
with three different measurement models:
\begin{itemize}
\item the Gaussian model with entries of $A$ drawn i.i.d from either $\N(0,1)$ for real signals or $\N(0,1/2)+i\N(0,1/2)$ for complex signals.
\item the unitary model with $A$ being the concatenation of unitary matrices:
\begin{equation*}
A = \begin{bmatrix}
Q_1\\
\vdots\\
Q_{\lceil \frac{m}{n}\rceil}
\end{bmatrix}
\end{equation*}
where $Q_{\ell},~\ell=1,\cdots,\lceil \frac{m}{n}\rceil-1$ are unitary matrices and $Q_{\lceil \frac{m}{n}\rceil}$ is a row submatrix of a unitary matrix. 
We generate each $Q_{\ell}$ from the QR factorization of a random Gaussian matrix.
\item the coded diffraction model for complex signals with 
\begin{equation}
Ax = \begin{pmatrix}
F(\bar{d}_1\odot x)\\
\vdots\\
F(\bar{d}_L\odot x)
\end{pmatrix},
\end{equation}
where $F$ denotes the $n\times n$ DFT matrix and $d_\ell\in\C^n,~\ell=1,\cdots,L$ are a series of coded diffraction patterns (CDPs). 
In this paper, we will consider the {\em octonary} pattern suggested in \cite{phaselift4,wirtinger}, but note that other
 patterns are also possible, see \cite{wirtinger,phasecut}. In the octonary pattern, every entry 
of $d_\ell$ is a random variable of the form $b_1b_2$, where $b_1$ takes the value in $\{1,-1,i,-i\}$ with equal probability
and $b_2$ takes the value of $\sqrt{2}/2$ or $\sqrt{3}$ with probability $4/5$ and $1/5$ respectively.
\end{itemize} 
For the coded diffraction model,  the pseudo-inverse of $A$  in Alg.~\ref{alg:gesa} is given by
\begin{equation*}
A^\dagger \begin{pmatrix}z_1\\ \vdots\\ z_L\end{pmatrix} = \sum_{\ell=1}^L\lb F^*z_\ell\rb\odot \lb d_\ell/\sum_{\ell=1}^L|d_\ell|^2\rb
\end{equation*}
where $z_\ell\in\C^n,~\ell=1,\cdots,L$.
In the block Kaczmarz method, we will assume that all the rows of the submatrix $A_\Gamma$ correspond to the same coded diffraction 
pattern, that is for any $x\in\C^n$,
\begin{equation*}
A_\Gamma x = F_\Gamma(d_{\ell}\odot x) \mbox{ for some }1\leq\ell\leq L.
\end{equation*} 
Consequently for any $z\in\C^{|\Gamma|}$, the pseudo-inverse of $A_\Gamma$ is given by
\begin{equation*}
A_\Gamma^\dagger z = \frac{1}{d_\ell}\odot \lb F_\Gamma^*z\rb.
\end{equation*}
In particular, if $A$ is partitioned into $L$ equal blocks with each block corresponding to a coded diffraction pattern, the estimate update  
in the block Kaczmarz method (Step $3$ of Alg.~\ref{alg:block_kacz})
can be simplified to 
\begin{equation*}
x_{l+1} = \frac{1}{d_\ell}\odot\lb F^*x_l\rb,~\ell =1,\cdots,L.
\end{equation*}
The resulted block Kaczmarz method can be viewed as applying the ER update to each coded diffraction pattern sequentially, which is also  known as the {\em iterative transform algorithm} in the optics community \cite{wavefront, liao}, though
 motivated from a different perspective here. However, the residual 
reduction property stated in \eqref{eq:er} does not hold in general, see the remark about the Kaczmarz methods for linear equations at the end of section~\ref{sec:kacz_linear}.
\subsubsection{Successful recovery rate}\label{sec:rate}
In the $1$D simulations, we set $n=256$. For the Gaussian and unitary models,  the algorithms are tested for $20$ values of $m$  selected according to $m=\mbox{round}(\delta\cdot n)$ with $\delta$ taking 
 $20$ equally spaced values from $2$ to $6$. For the coded diffraction model, the number of coded diffraction patterns varies from $2$ to $12$. In this subsection, we will use the initial point suggested in \cite{wirtinger} for all the tested algorithms, i.e., we set 
\begin{equation}\label{eq:eig_init}
x_0 = \sqrt{\frac{\sum_{r=1}^ny_r}{m}}z,
\end{equation}
where $z$ is the  unit leading eigenvector of $\sum_{r=1}^ny_ra_ra_r^*$. 
It has been proved in \cite{wirtinger} that for the Gaussian and coded diffraction models $x_0$ can be arbitrarily close to $\hx$ 
if  $m$ is the order of $n\log n$. Starting from the same initial point,
ER and Wirtinger Flow are terminated after  $2500$ iterations are reached or the relative residual $\ln |Ax_l|^2-y\rn_2/\ln y\rn_2$ is below $10^{-8}$, while the Kaczmarz methods are terminated after they have run $500$ cycles or the maximum relative residual defined in \eqref{eq:max_atom_res} is below $10^{-7}$. For every pair of $(n,m)$, $50$ random tests are conducted. We consider a vector to be successfully recovered if the algorithm returns a vector $x_l$ which has a small relative error, that is when 
\begin{equation*}\mbox{rel.err}:=\frac{\dist(x_l,\hx)}{\ln\hx\rn_2}\leq 10^{-5}.\end{equation*}

We test the block Kaczmarz method with four different block sizes
\begin{equation*}|\Gamma_r|\in\{n/8, n/4,n/2,n\}=\{32,64,128,256\}.
\end{equation*} The rows of the measurement matrix are partitioned into
equal blocks (except the last block) with each block containing $|\Gamma_r|$ consecutive rows. 
The empirical probabilities of successful recoveries for different tested models and algorithms are plotted in figure~\ref{fig:prob_of_succ}.
We can make several observations out of the plots.

\begin{figure}[htp]
\centering
\begin{tabular}{cc}
\vspace{0.1in}
\includegraphics[height=1.8in,width=2.2in,trim=0.7in 3in 0.7in 3in]{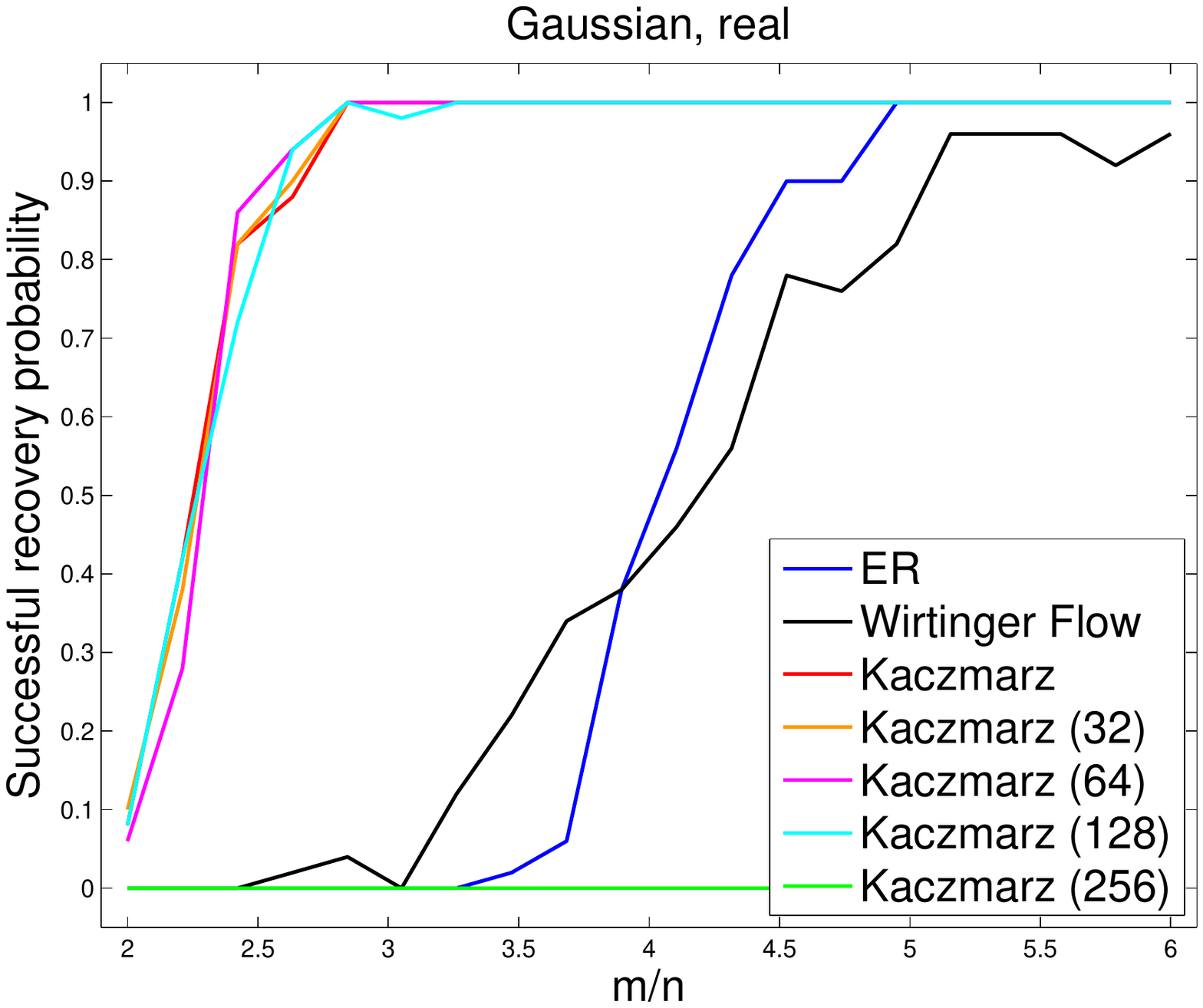}& 
\includegraphics[height=1.8in,width=2.2in,trim=0.7in 3in 0.7in 3in]{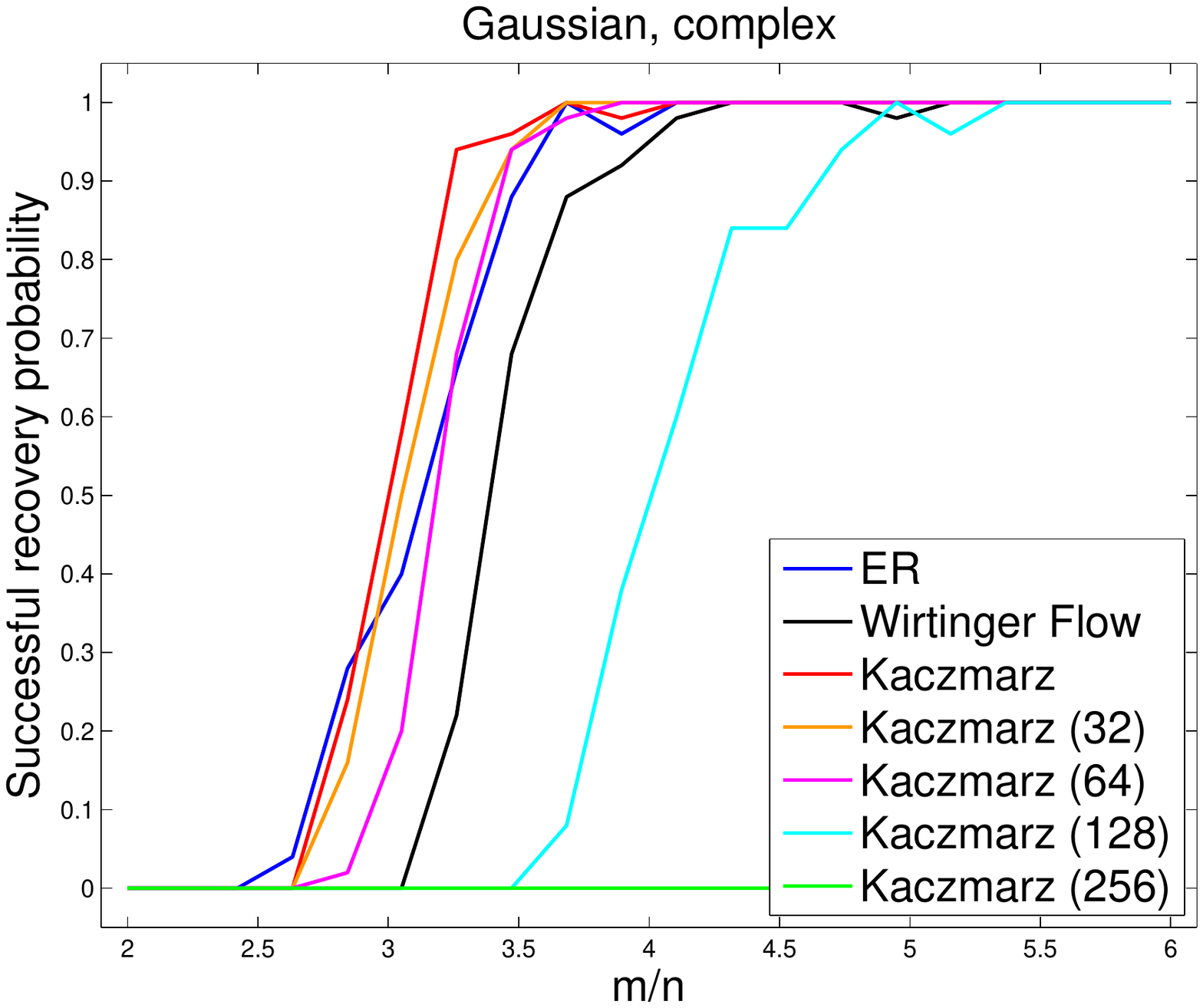}\\
\vspace{0.2in}
(a) & (b)\\
\vspace{0.1in}
\includegraphics[height=1.8in,width=2.2in,trim=0.7in 3in 0.7in 3in]{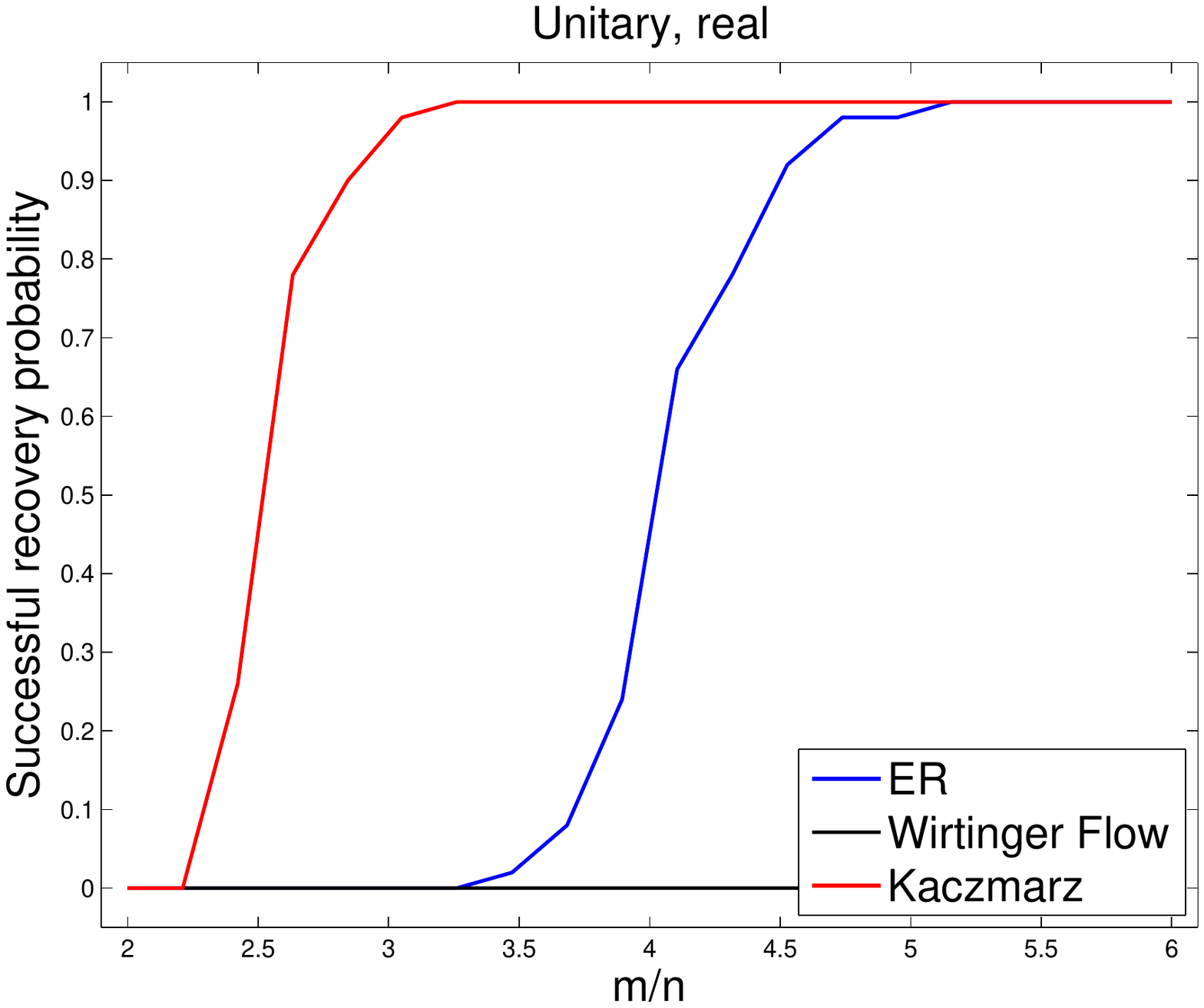}& 
\includegraphics[height=1.8in,width=2.2in,trim=0.7in 3in 0.7in 3in]{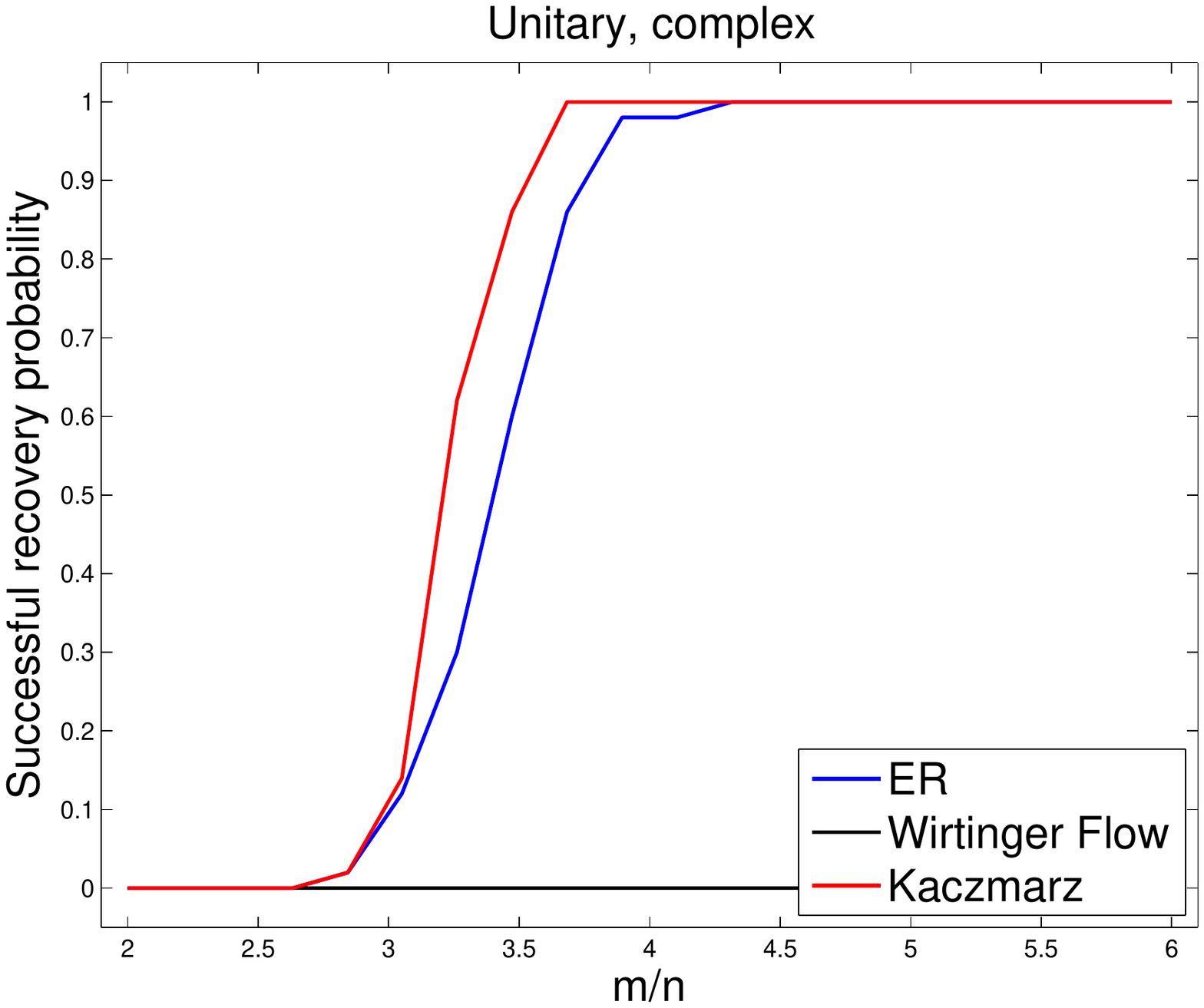}\\
\vspace{0.2in}
(c) & (d) \\
\vspace{0.1in}
\includegraphics[height=1.8in,width=2.2in,trim=0.7in 3in 0.7in 3in]{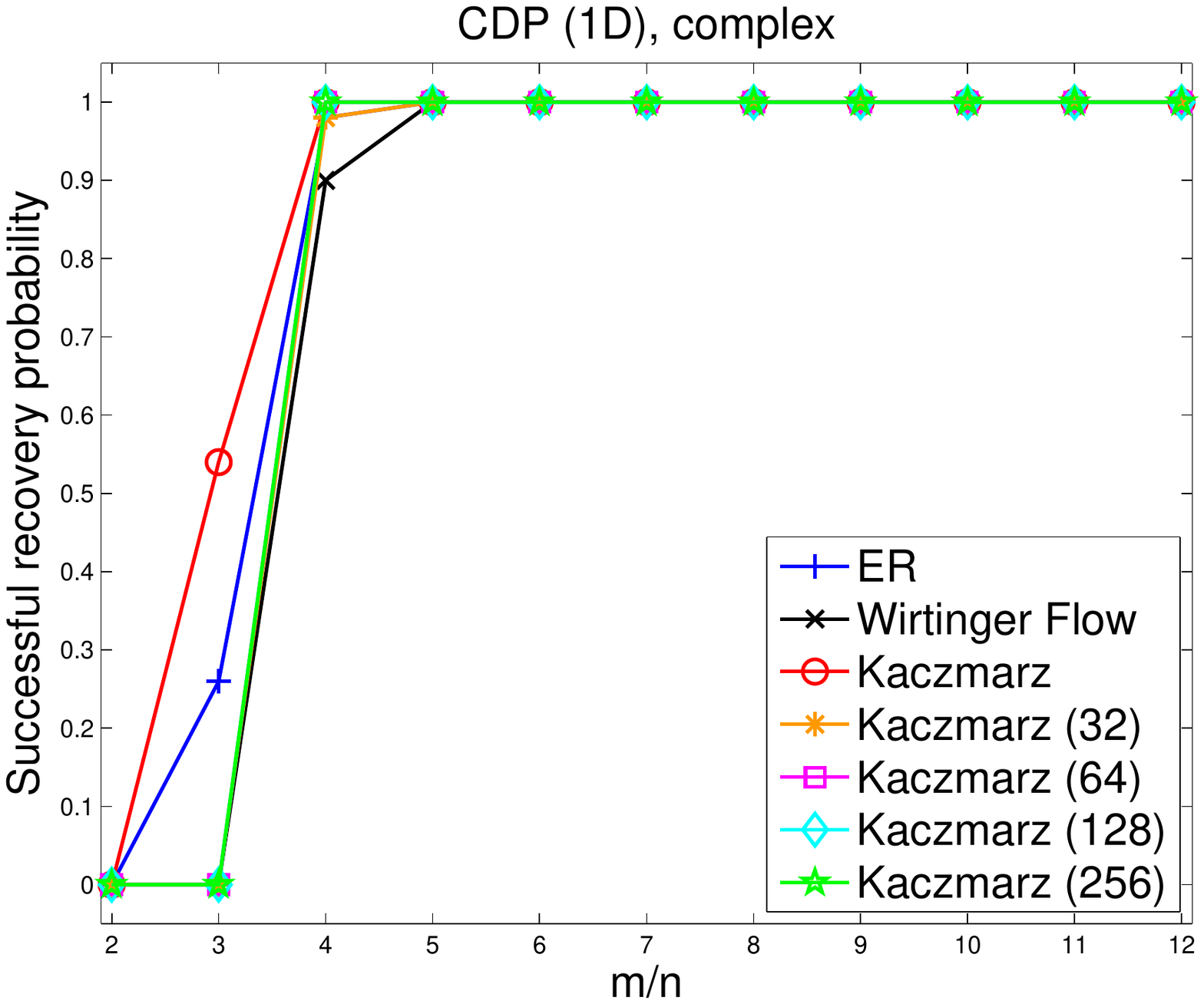}& 
\includegraphics[height=1.8in,width=2.2in,trim=0.7in 3in 0.7in 3in]{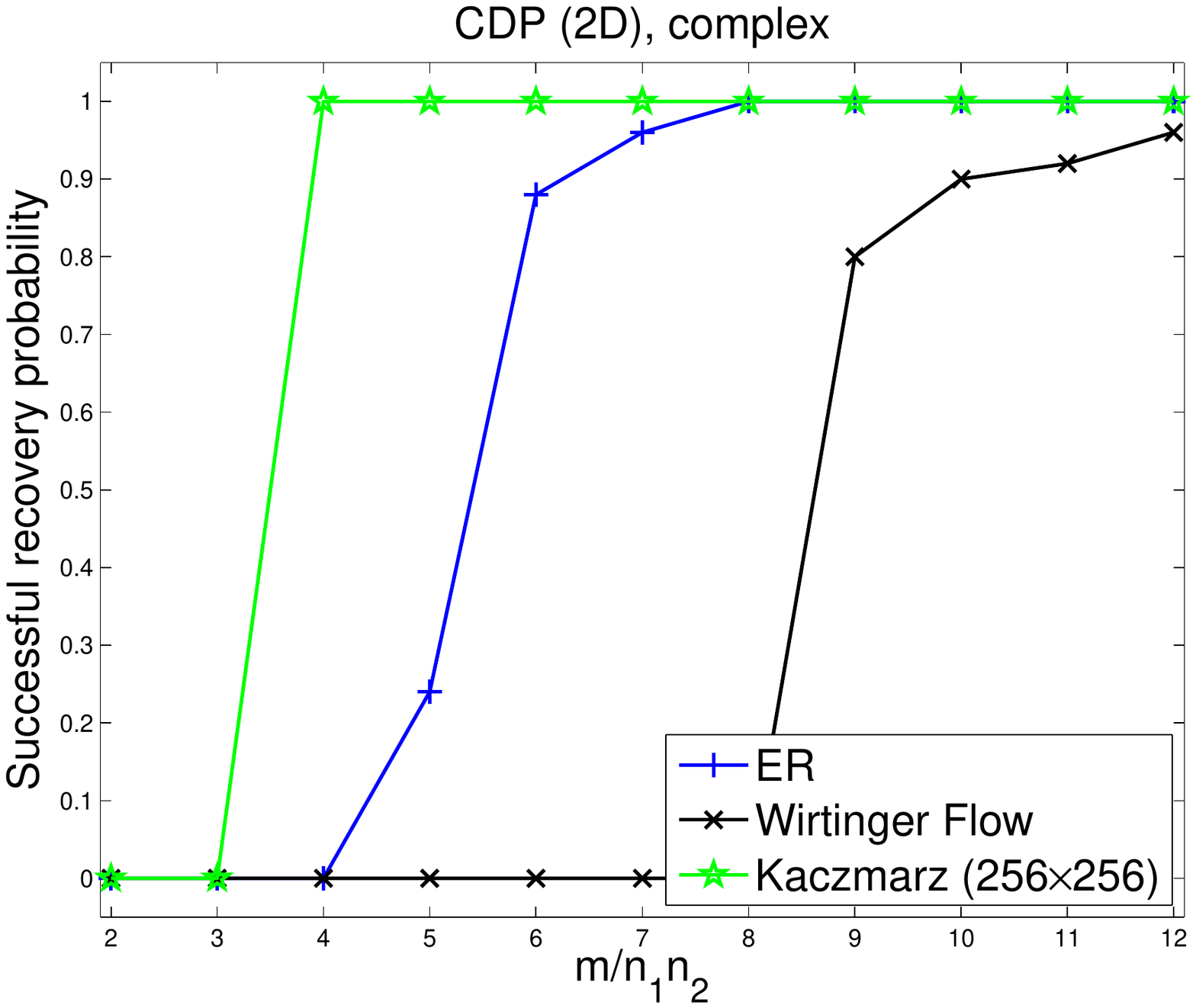}\\
(e) & (f)  
\end{tabular}
\caption{Empirical probability of successful recovery out of $50$ random trials; $1$D: $n=256$, $2D$: $n_1\times n_2=256\times 256$. In the coded 
diffraction model
$m=Ln$ or $m=Ln_1n_2$ with $L$ only taking integral values.\label{fig:prob_of_succ}}
\end{figure}
For the Gaussian real model (figure~\ref{fig:prob_of_succ} (a)), we observe that the recovery probability curves of the simple Kaczmarz method and the block Kaczmarz method with block sizes $32,64\mbox{ and }128$ are close to each other. They can successfully recover  $85\%$ of the test signals when $m=2.5n$. There are several
instances of successful recovery even when $m=2n$.
Note that there is a unique solution to \eqref{eq:problem_setup} in real case only if $m\geq 2n-1$ \cite{BaCaEd06a}.
In contrast, ER and Wirtinger Flow respectively need $4.5n$ and $5n$ number of measurements for successful recovery with high probability. For the complex case (figure~\ref{fig:prob_of_succ} (b)),
the simple Kaczmarz method, the block Kaczmarz method with block sizes $32,64$ and ER have similar recovery probability curves which are slightly superior to Wirtinger Flow. They 
can successfully recover the test signals  with high probability when $m\approx 4n$ while the block Kaczmarz method with block size $128$ needs $m=5n$ number of measurements.

However,  the block Kaczmarz method with block size $n=256$ completely fails in both the real and complex cases. Here is a potential explanation for its failure.  
Example~\ref{example} indicates that  $x_{l+1}$ is generally not  the projection of $x_l$ onto the non-convex set \eqref{eq:proj_set} but only a heuristic approximation. Furthermore, theorem~\ref{prop2} shows that the deviation of $x_{l+1}$ from the optimal projection relies heavily on the condition numbers of the submatrices. In the block Kaczmarz method, $A_{\Gamma_r}$ is a $|\Gamma_r|\times n$ matrix.  Applying the Bai-Yin Law \cite{yinbai} heuristically shows that
\begin{equation}\label{eq:block_cond}
\kappa\lb A_{\Gamma_r}\rb\approx \frac{1+\sqrt{|\Gamma_r|/n}}{1-\sqrt{|\Gamma_r|/n}}.
\end{equation}
Therefore when $|\Gamma_r|=n/8,~4/n,\mbox{ and }n/2$, the corresponding conditions number of $A_{\Gamma_r}$ are approximately $2, ~3, \mbox{ and }6$. However, when $|\Gamma_r|=n$, 
the condition number of $A_{\Gamma_r}$ can be very large and hence $x_{l+1}$ cannot be a good approximation of the optimal projection of $x_l$ anymore even for very good approximations of the phase factor. 

Due to proposition~\ref{prop1}, only the block Kaczmarz method with each block corresponding to a unitary matrix $Q_\ell$ is tested for the unitary model. The Kaczmarz method can successful recover the real signals with high probability when $m\approx 3n$, while ER needs  $4.5n$ number of measurements (figure~\ref{fig:prob_of_succ} (c)). For the complex reconstruction problem (figure~\ref{fig:prob_of_succ} (d)), the recovery probability curve of the Kaczmarz method is higher than that of ER when $m\leq 4n$. Wirtinger Flow does not work for the unitary model in neither the real nor the complex cases.

For the $1$D coded diffraction model (figure~\ref{fig:prob_of_succ} (e)), the simplest Kaczmarz method has the largest probability of successful recovery when $L=3$. All the tested algorithms can recover the  signals
successfully with probability greater than $90\%$ when $L=4$, including the block Kaczmarz method with block size $n=256$. Notice that the recovery probability curves for the block 
Kaczmarz method with different block sizes are almost indistinguishable. 
In the coded diffraction model, the condition number of $A_{\Gamma_r}$ is less than $\sqrt{6}$ for any $|\Gamma_r|\leq n$. 
We also conduct tests for the $2$D coded diffraction model \footnote{For the other two models, $2D$ tests are essentially the same as $1$D tests after vectorizing the signals and measurement matrices.} for which the vectors $\hx\in\C^{n_1\times n_2}$ are sampled from Gaussian distribution
 $\N(0,I_{n_1\times n_2})+i\N(0,I_{n_1\times n_2})$. The set-up for the $2D$ coded diffraction model is similar to the $1$D case, but with the $1$D DFT replaced by the $2$D DFT.
For the Kaczmarz methods, only the block variant with each block corresponding to a coded diffraction is tested because for the coded diffraction model, the block Kaczmarz method can 
be potentially much faster than the simple Kaczmarz method as it can take advantage of the FFT. In the $2$D tests, we set $n_1=n_2=256$. Figure~\ref{fig:prob_of_succ} (f) shows that
the block Kaczmarz method is still able to successfully recover all the test signals when $L=4$, which is independent of the dimensionality of the signals. However, ER and Wirtinger Flow respectively need $6n$ and $10n$ number of measurements to achieve the high probability recovery, compared with the $4n$ number of measurements needed for the $1$D case. 
\subsubsection{Sensitivity to initial points}
For non-convex programming, the initialization procedure is very important and a good initial point can prevent the 
convergence to a local minimal solution. It has been reported in the literature that the performance of ER depends 
heavily on the initial points, see \cite{alt_min_pr, phasecut, wirtinger}. In this section, we will investigate  
the performance of the Kaczmarz methods when the initial points are generated 
randomly according to the standard Gaussian distribution. Figure~\ref{fig:prob_eig_rand} compares the recovery probability 
curves of  ER and the Kaczmarz methods under the random and spectral  initialization. In general,
 the Kaczmarz methods with the spectral initialization \eqref{eq:eig_init} have higher recovery curves than the random initialization when the number of measurements is small. However, it only requires at most $0.5n$ more number of measurements for the Kaczmarz methods with the random initialization to successfully recover all the test signals. 
In particular, the recovery curves of the block Kaczmarz methods for the  coded diffraction model are nearly indistinguishable 
for the two different initialization methods. In contrast, ER with the random initialization requires at least $2n$ more number of measurements 
to achieve the same recovery probability as the one with the spectral initialization. The randomly initialized ER fails all the tests for the real Gaussian
and unitary measurements. For the Gaussian measurement model, it is also worth noting that the block Kaczmarz method is less sensitive to the initial points than 
the simple Kaczmarz method.

Although the Kaczmarz methods are relatively less sensitive to the initial points than ER, starting from the initial points provided by
the spectral method can reduce the computation time of all the tested algorithms. Therefore without further mention, we will use the initial point $x_0$ in \eqref{eq:eig_init} for all the tested algorithms in the remainder of this section. 

\begin{figure}[htp]
\centering
\begin{tabular}{cc}
\vspace{0.1in}
\includegraphics[height=1.8in,width=2.2in,trim=0.7in 3in 0.7in 3in]{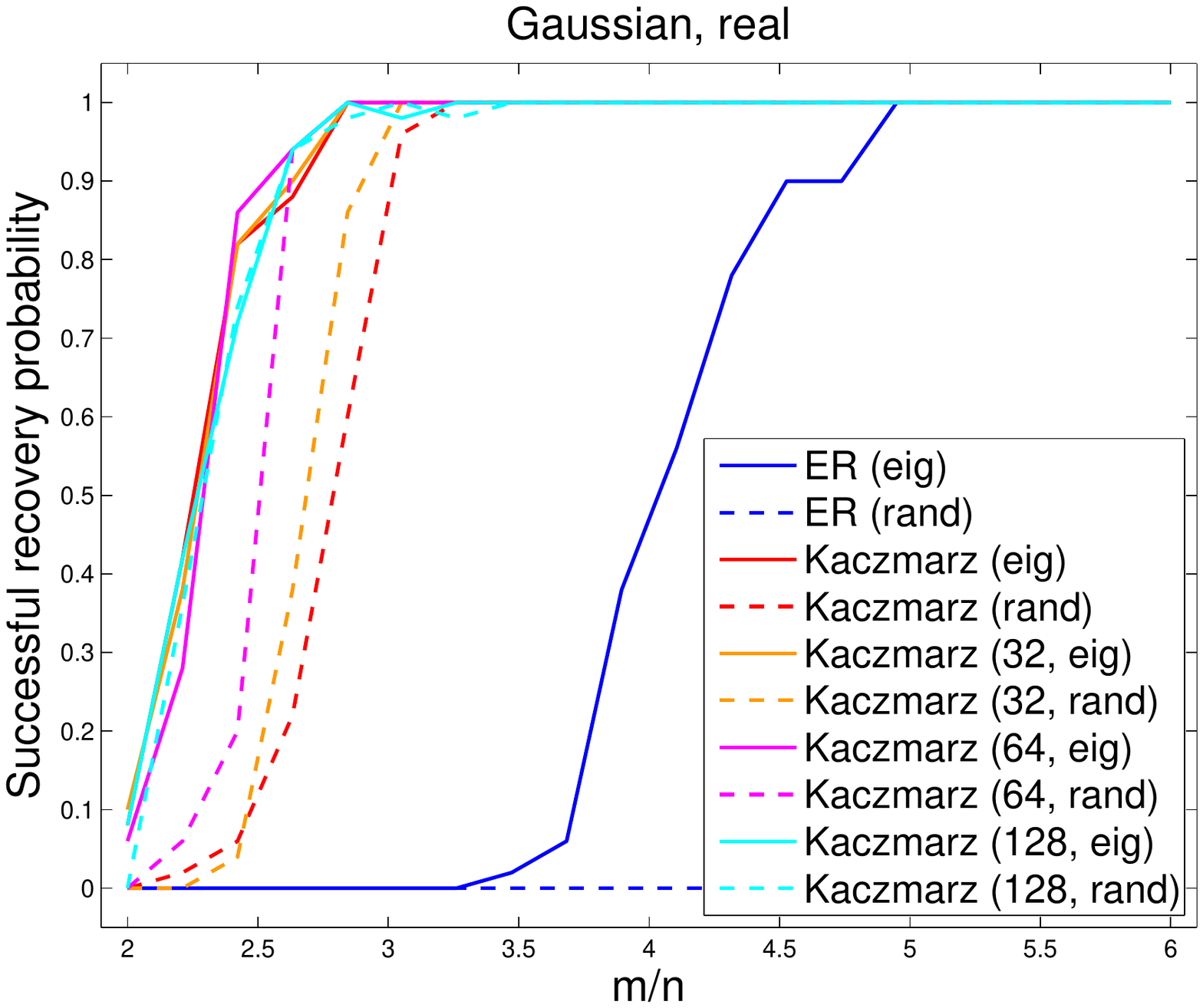}& 
\includegraphics[height=1.8in,width=2.2in,trim=0.7in 3in 0.7in 3in]{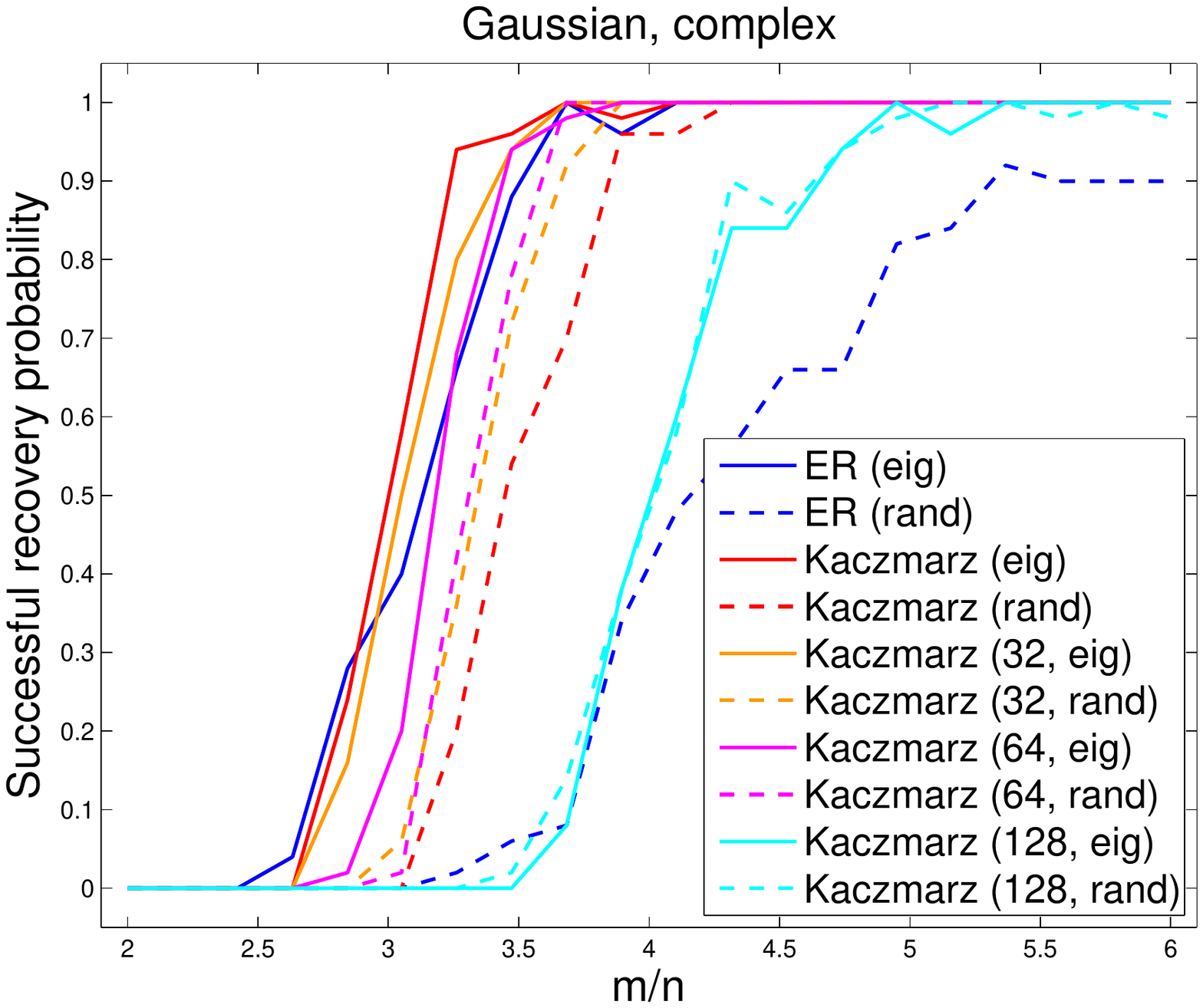}\\
\vspace{0.2in}
(a) & (b)\\
\vspace{0.1in}
\includegraphics[height=1.8in,width=2.2in,trim=0.7in 3in 0.7in 3in]{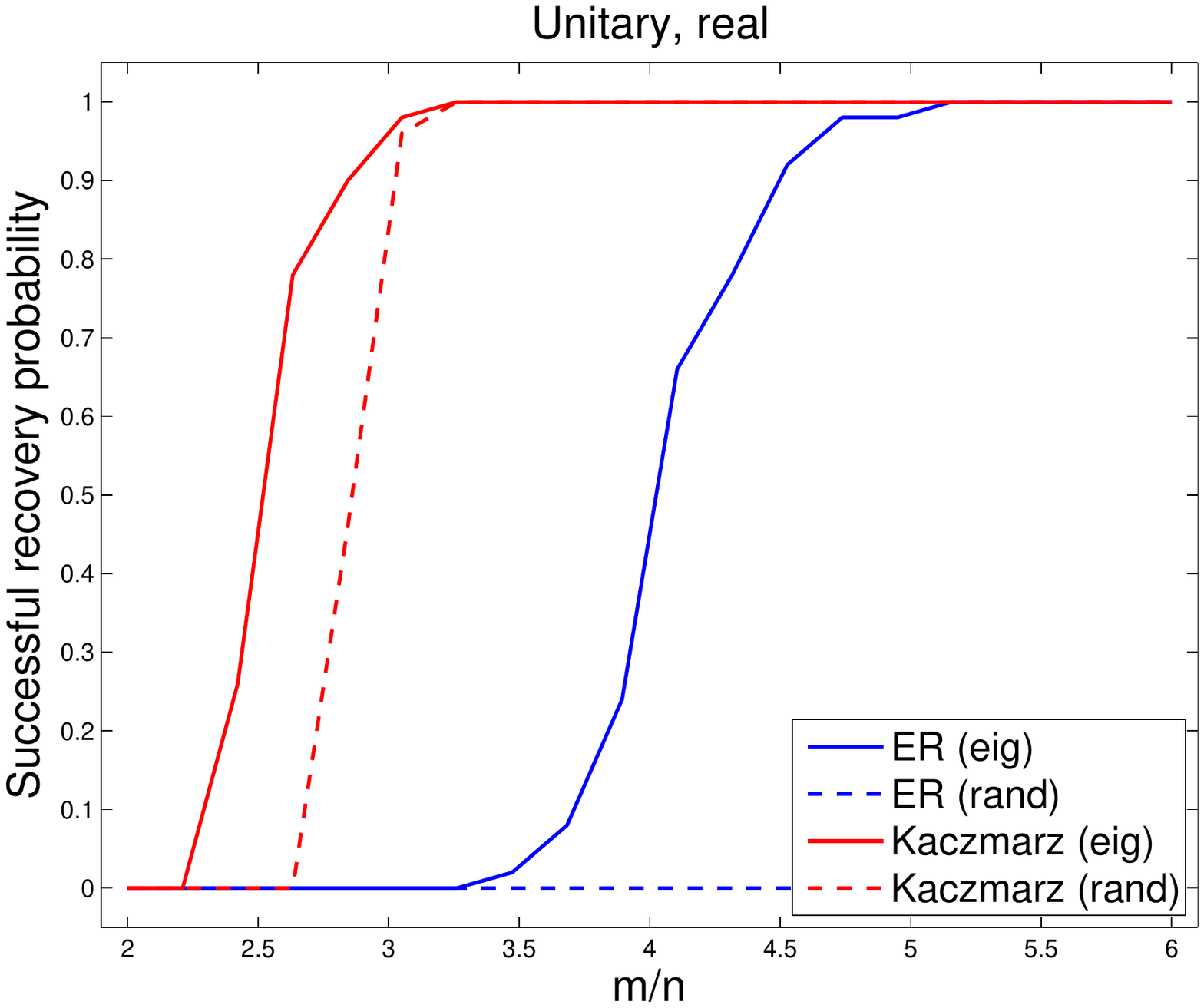}& 
\includegraphics[height=1.8in,width=2.2in,trim=0.7in 3in 0.7in 3in]{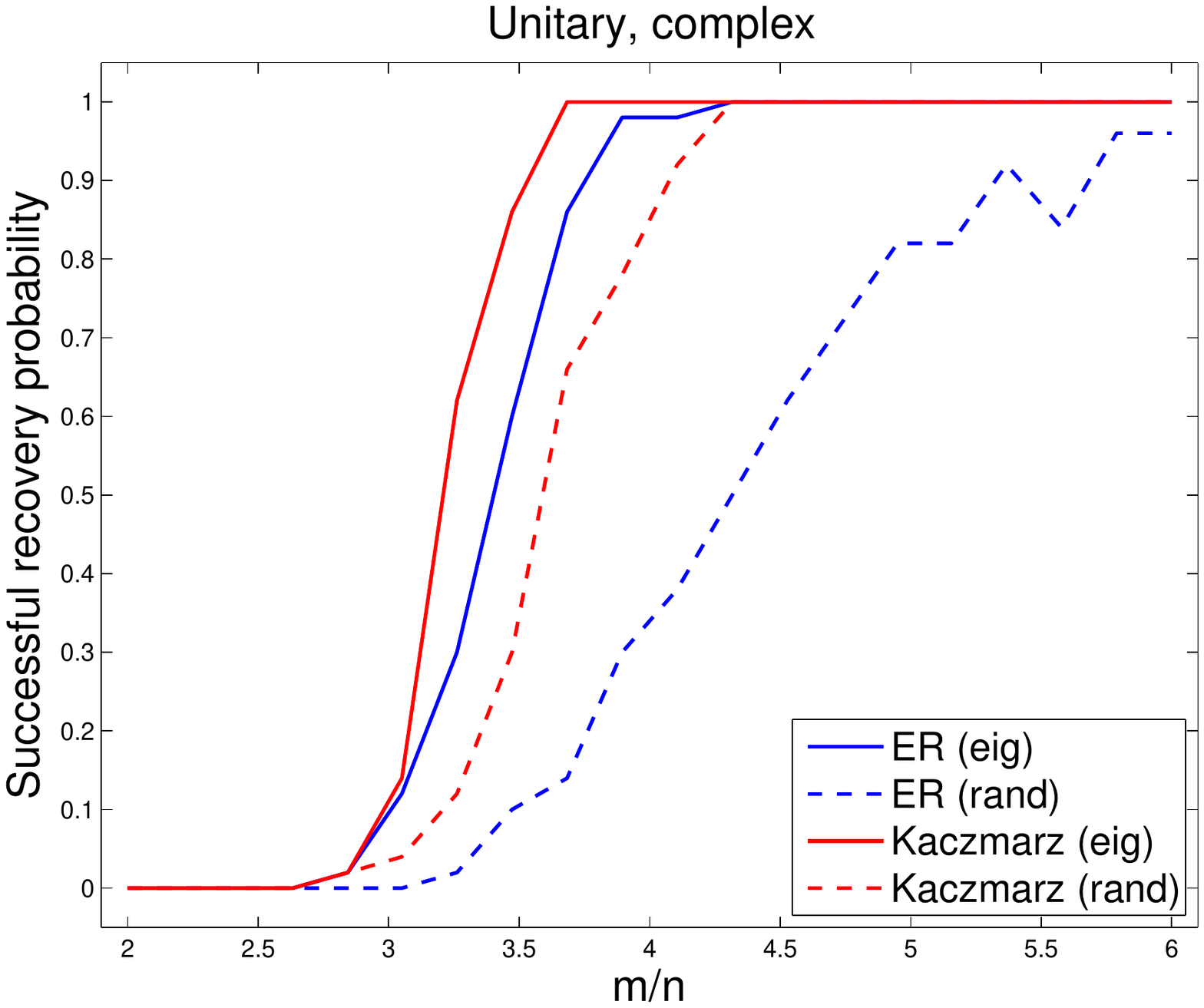}\\
\vspace{0.2in}
(c) & (d) \\
\vspace{0.1in}
\includegraphics[height=1.8in,width=2.2in,trim=0.7in 3in 0.7in 3in]{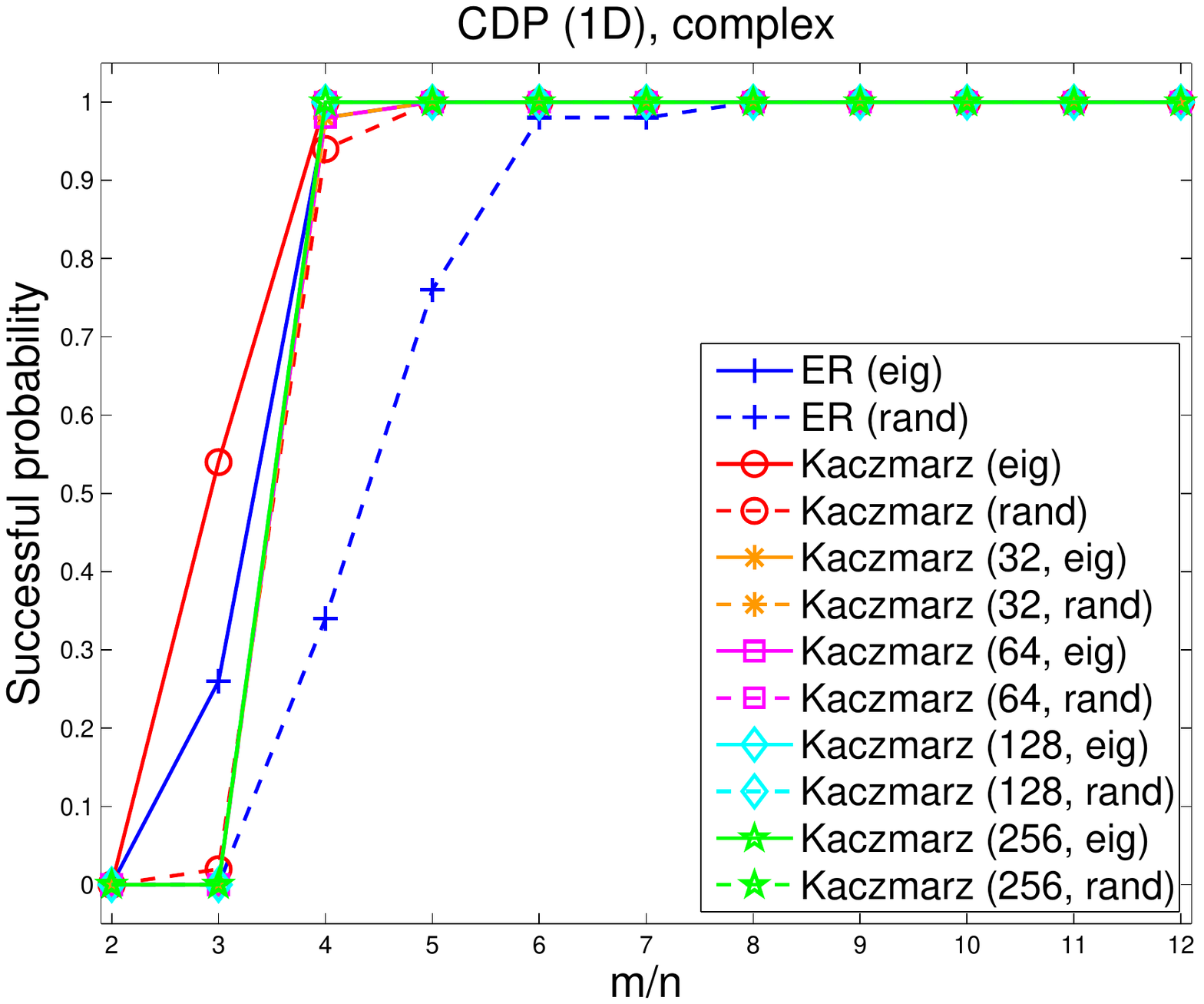}& 
\includegraphics[height=1.8in,width=2.2in,trim=0.7in 3in 0.7in 3in]{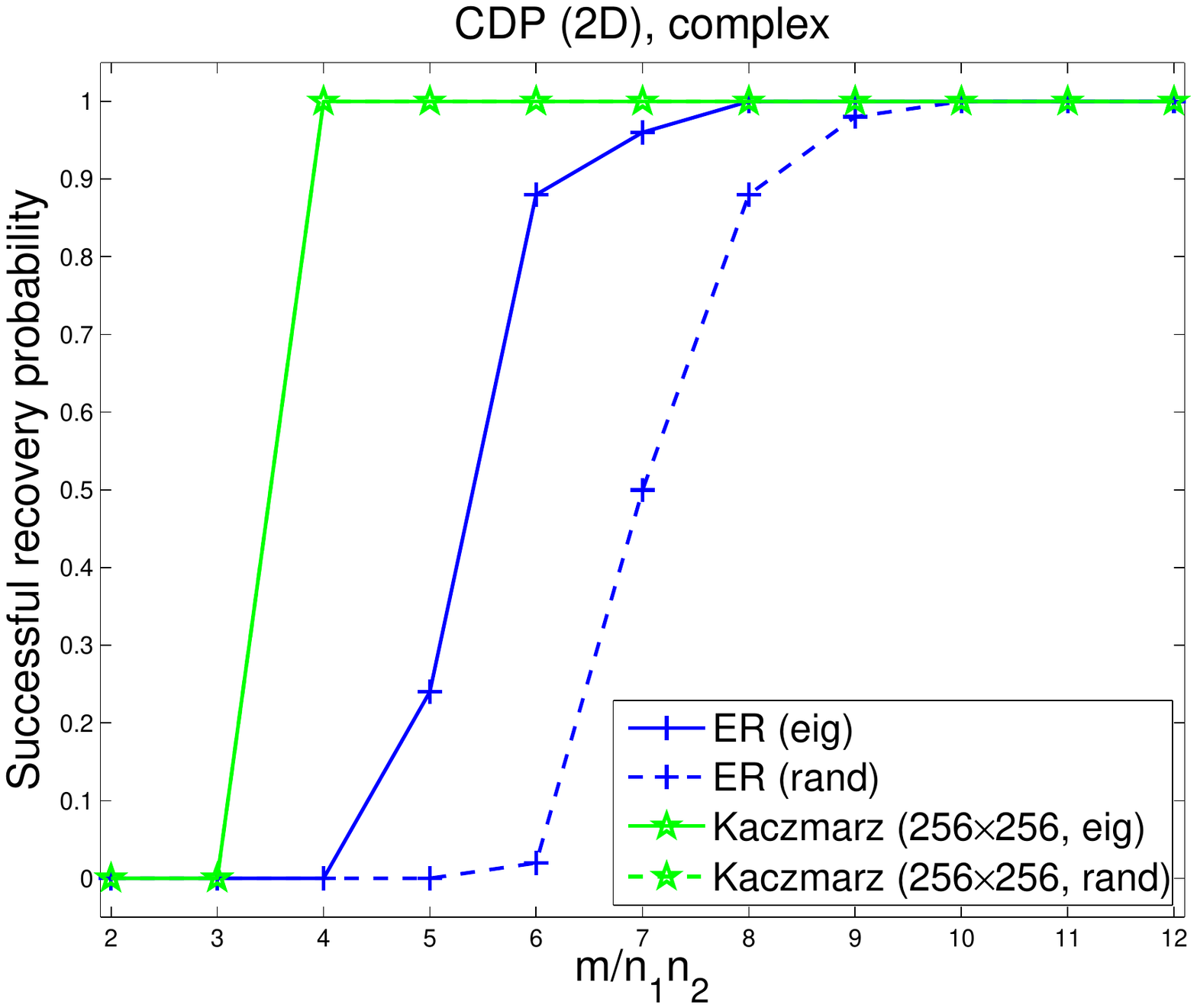}\\
(e) & (f)  
\end{tabular}
\caption{Empirical probability of successful recovery out of $50$ random trials for random and eigenvector  initializatons; $1$D: $n=256$, $2D$: $n_1\times n_2=256\times 256$. In the coded 
diffraction model
$m=Ln$ or $m=Ln_1n_2$ with $L$ only taking integral values.\label{fig:prob_eig_rand}}
\end{figure}

\subsubsection{Computation time} \label{sec:comp_time}
Next, we evaluate the computation time of the tested algorithms when solving the random problems to high accuracy. We conduct the
tests for the Gaussian and coded diffraction models with a sufficient number of measurements so that all the tested algorithms 
can succeed in recovering all of the {\em ten} randomly generated signals. The number of measurements $m$ is set  to  $6n$ for the 
Gaussian models and the coded diffraction $1$D model and $m$ is set to $12n_1n_2$ for the coded diffraction $2$D model. The maximum number of 
iterations (cycles) is set to $2500$ ($500$) as in section~\ref{sec:rate}. We set $\epsilon_1=10^{-10}$ for ER and Wirtinger Flow, $\epsilon_2=10^{-8}$ for the simple Kaczmarz method, and $\epsilon_2=10^{-9}$
for the block Kaczmarz method.
 The average number of iterations or cycles and the average computation time are listed in table~\ref{table}. First, it can be observed that it takes a similar number of cycles for the simple and block Kaczmarz methods to achieve  similar accuracy. However, the block Kaczmarz method is much faster in computation time as it can take advantage of the {\tt BLAS2} subroutines rather than {\tt BLAS1} for the
Gaussian models and the fast Fourier transform for the coded diffraction models. For the Gaussian complex model and the coded diffraction models, the block Kaczmarz method
are dramatically faster than ER and Wirtinger Flow because of its fast convergence rate. 

\begin{table}[htp]
\centering
\small
\caption{Computational results of ER, Wirtinger Flow and the Kaczmarz methods for solving the problems 
to high accuracy. In the block Kaczmarz method, the block size is selected to be $n/4$ for the Gaussian model, and respectively $n$ and $n_1\times n_2$ for the coded 
diffraction $1$D and $2$D models.}
\label{table}
\vspace{0.2cm}
\begin{tabular}{cccccccc}
\hline
& \multicolumn{3}{c}{ER} & &\multicolumn{3}{c}{Wirtinger Flow}\\
\cline{2-4}\cline{6-8}
& \#its & time(s) & rel.err &&\#its & time(s)  & rel.err\\
\hline
Gaussian Real & 7& \textbf{0.032} & 8.68e-13 && 286&0.886&1.9e-10  \\
Gaussian Complex &144 &1.067 &2.0e-10 & & 911&7.011 &3.0e-10 \\
CDP 1D &157 &0.054 &1.99e-10 & &909 &0.301 & 2.94e-10\\
CDP 2D & 149&13.443 &1.3e-10 & &315 &22.796 & 1.91e-10\\
\hline
& \multicolumn{3}{c}{simple Kaczmarz} & &\multicolumn{3}{c}{block Kaczmarz}\\
\cline{2-4}\cline{6-8}
& \#cycles & time(s)  & rel.err &&\#cycles & time(s) & rel.err\\
\hline
Gaussian Real &10 &0.469 &4.81e-13& & 8& 0.036& 6.81e-12 \\
Gaussian Complex & 24&1.643 &1.14e-10 & & 26&\textbf{0.274}&3.04e-10  \\
CDP 1D & 24& 1.927&7.3e-11 & &25 & \textbf{0.009}&3.13e-10 \\
CDP 2D &- &- & -& &15 &\textbf{1.478} &2.41e-11 \\
\hline
\end{tabular}
\end{table}

\subsubsection{Robustness to additive noise} 
With the same number of measurements as in  section~\ref{sec:comp_time}, we further explore the performance of the algorithms under noisy measurements of the form
\begin{equation}
y = \max(|A\hx|^2+\varepsilon\ln |A\hx|^2\rn_2\cdot e,0),
\end{equation}
where    $\varepsilon>0$ denotes the noise level and $e\in\R^m$ is uniformly distributed on the unit sphere. We test the algorithms with nine different noise 
levels from $10^{-5}$ to $0.1$. The  plots of the relative errors against the noise levels are shown in figure~\ref{fig:robust_to_noise} for the Gaussian 
and coded diffraction models. The plots show clearly the desirable linear scaling between the noise levels and the relative errors for all the tested algorithms and tested models.
\begin{figure}[htp]
\centering
\begin{tabular}{cc}
\vspace{0.1in}
\includegraphics[height=1.8in,width=2.2in,trim=0.7in 3in 0.7in 3in]{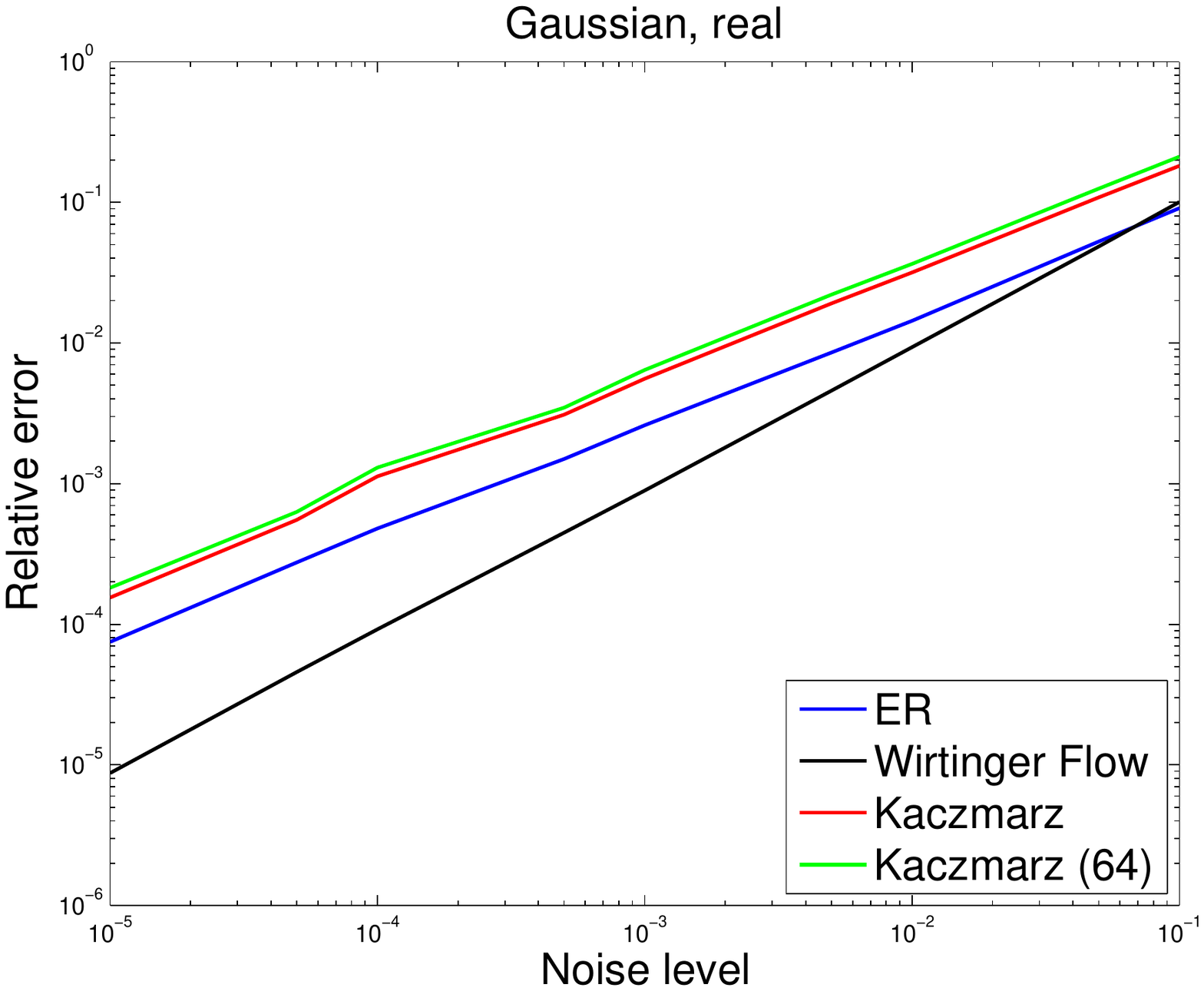}& 
\includegraphics[height=1.8in,width=2.2in,trim=0.7in 3in 0.7in 3in]{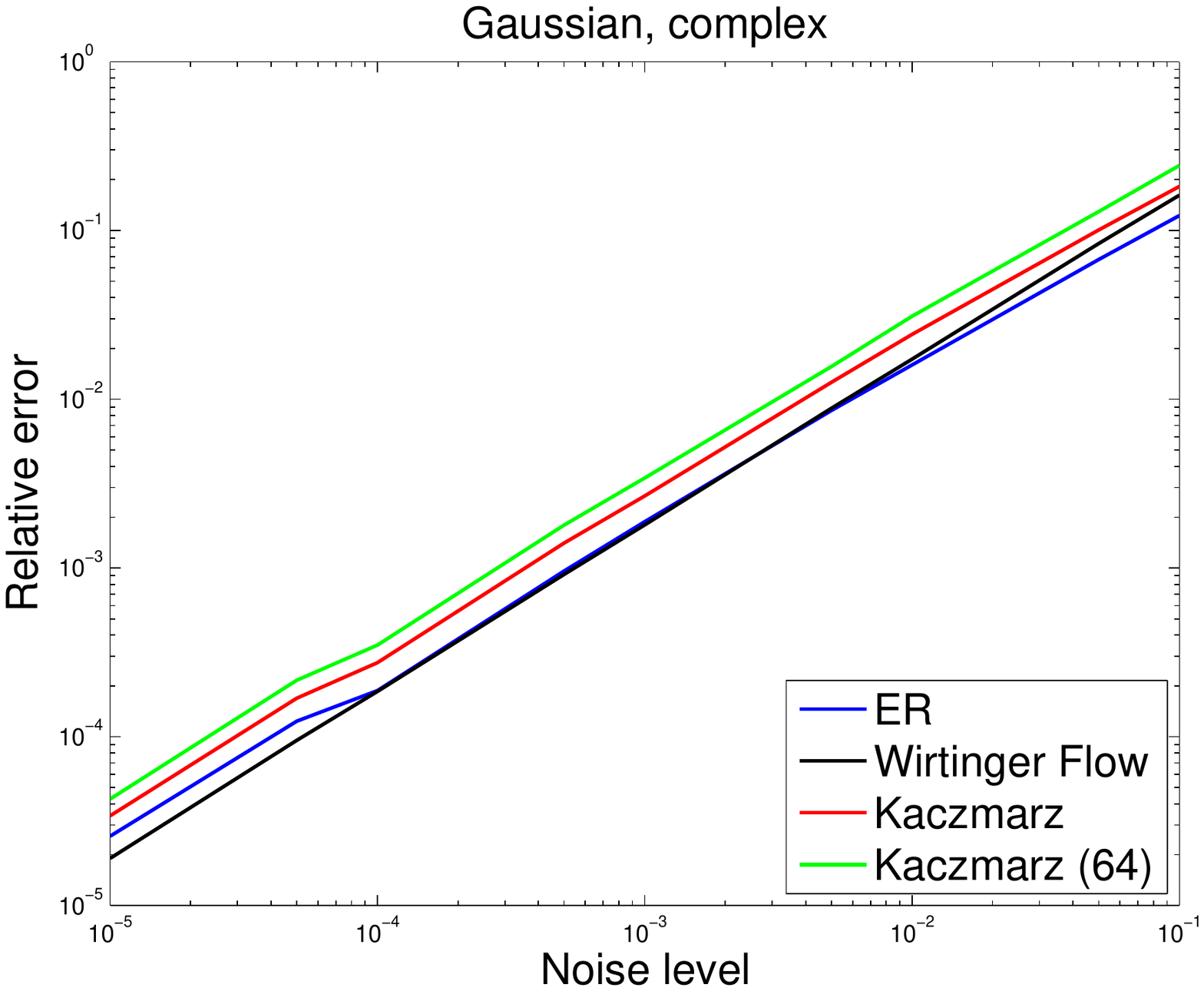}\\
\vspace{0.2in}
(a) & (b)\\
\vspace{0.1in}
\includegraphics[height=1.8in,width=2.2in,trim=0.7in 3in 0.7in 3in]{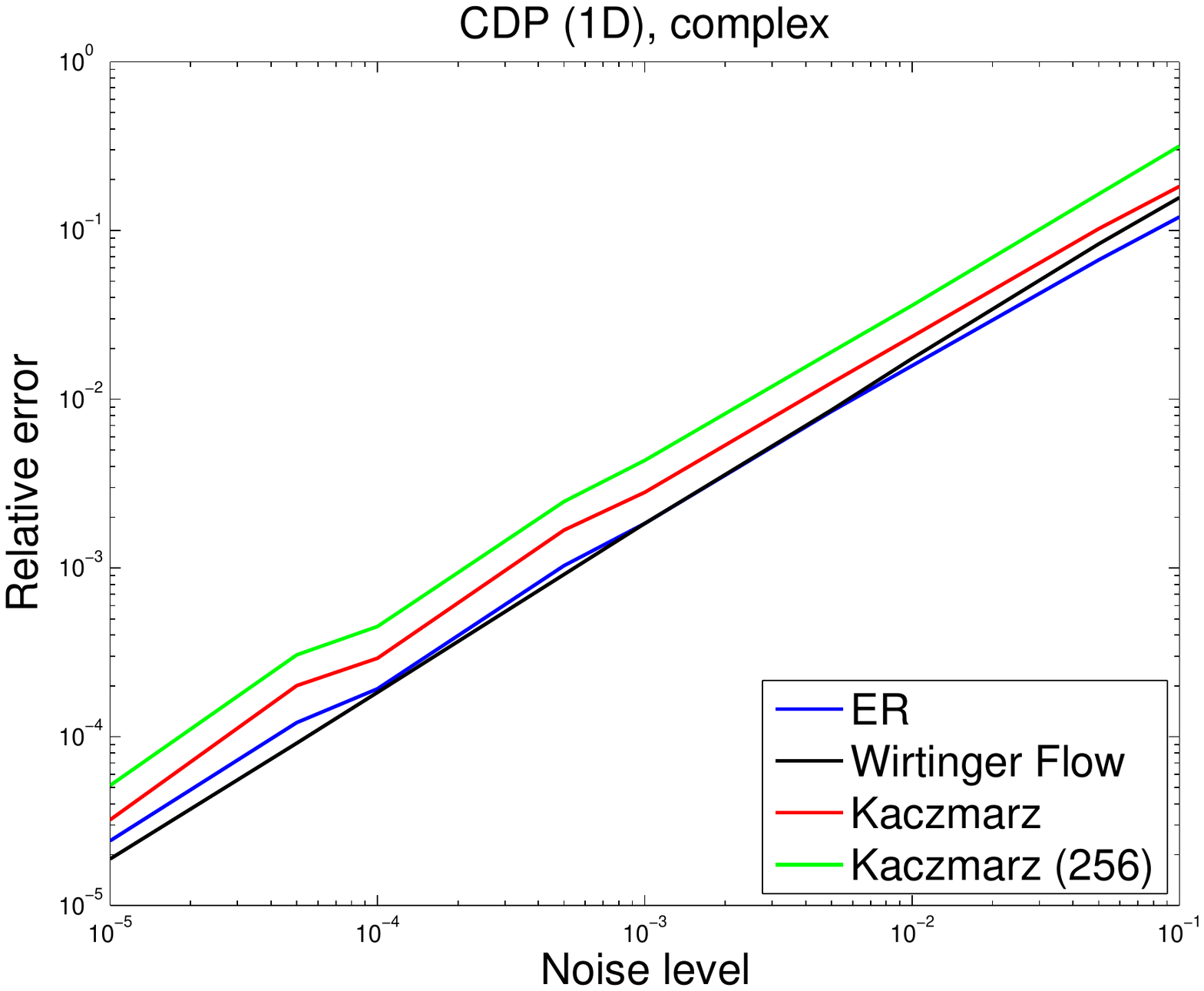}& 
\includegraphics[height=1.8in,width=2.2in,trim=0.7in 3in 0.7in 3in]{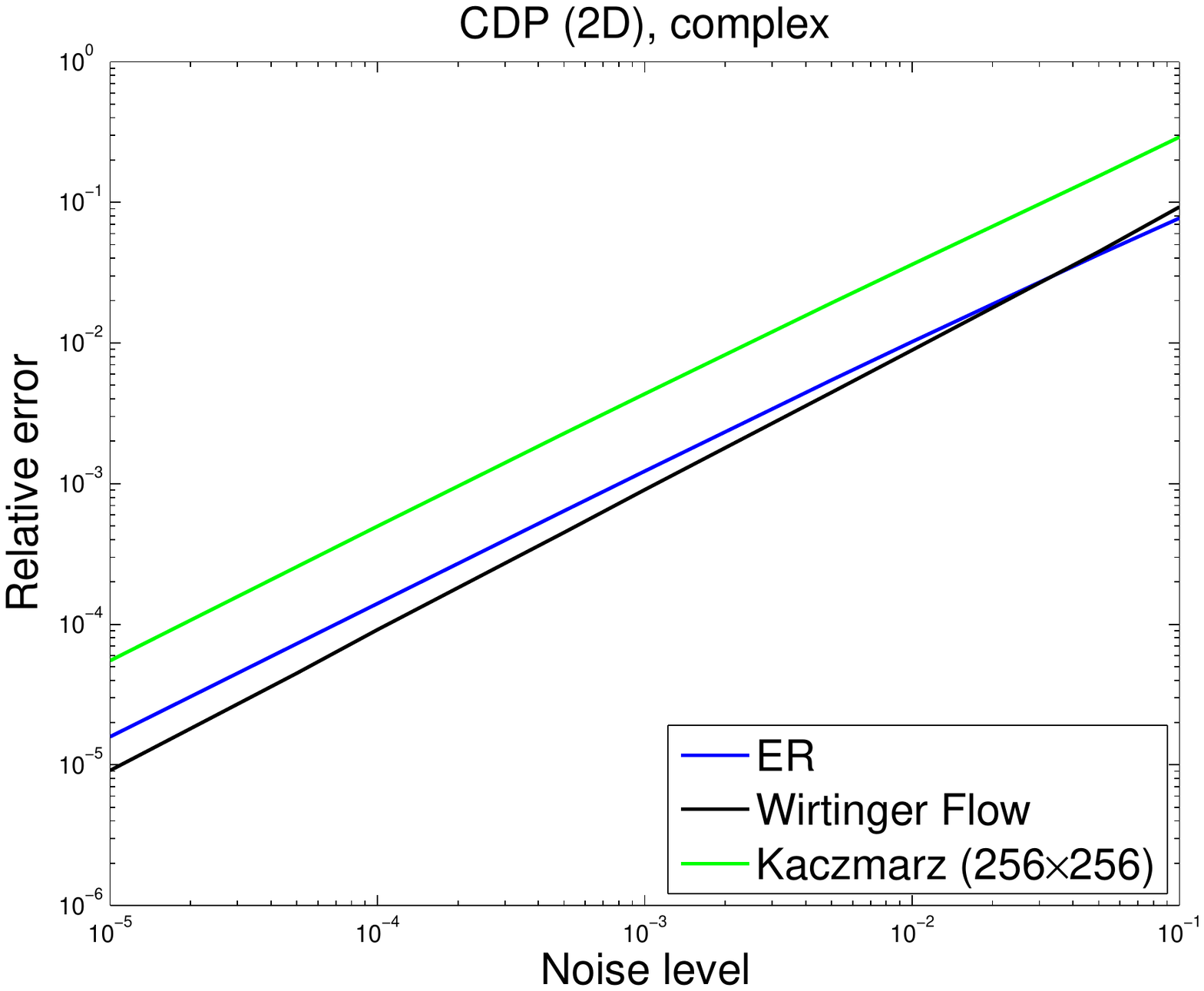}\\
\vspace{0.1in}
(c) & (d)
\end{tabular}
\caption{Log-log plots of relative errors vs noise levels; $1$D $n=256$, $2$D: $n_1=n_2=256$.\label{fig:robust_to_noise}}
\end{figure}
 
\subsection{Performance on molecules and natural images}
\begin{figure}[htp]
\centering
\begin{tabular}{cc}
\includegraphics[height=1.5in,width=2.2in,trim=0.7in 2.5in 0.7in 3.45in]{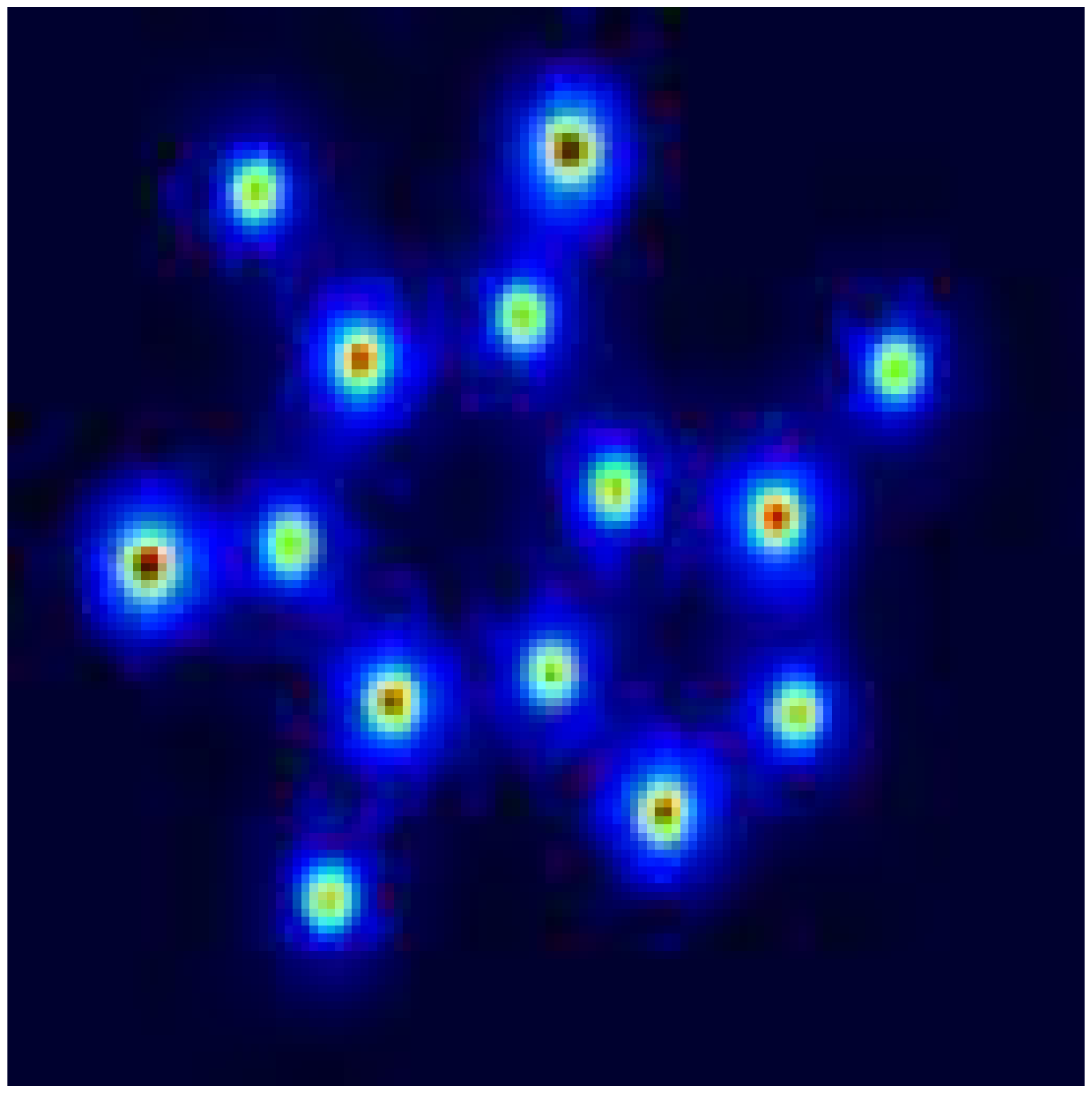}& 
\includegraphics[height=1.8in,width=2.5in,trim=0.7in 3in 0.7in 3in]{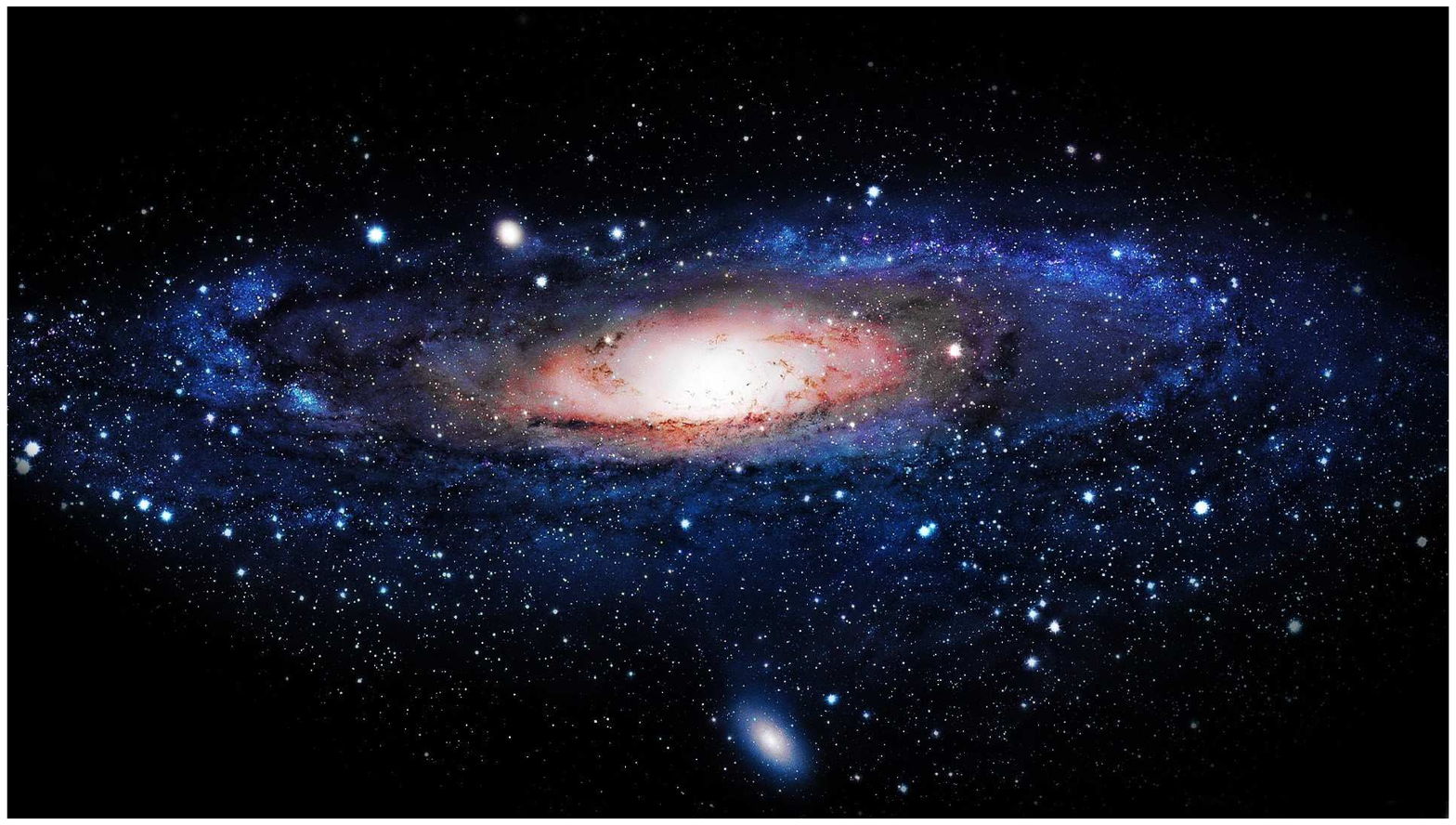}\\
(a) & (b)
\end{tabular}
\caption{(a) projection of the $3$D Caffeine molecule's density map along $z$-axis, size: $128\times 128$; (b) photograph of the Milky Way Galaxy, size $1080\times 1920$.\label{fig:photos}}
\end{figure}

In this subsection, we evaluate the performance of the Kaczmarz methods on real images and molecules for the coded diffraction model. Based on the empirical observations in section~\ref{sec:rand_tests}, apparently the block Kaczmarz method
with each block corresponding to a coded diffraction pattern is highly recommended for this task compared to the other variants. So we only test this variant compared with ER and Wirtinger Flow, using the same stopping criteria as in section~\ref{sec:comp_time}. The algorithms are tested for four different numbers of coded diffraction patterns $L=4,8,12,16$. For a fixed $L$, we repeat the tests {\it ten} times.

We first run the algorithms on the projection of the $3$D Caffeine molecule's density map onto the $xy$-plane along the $z$-axis. Figure~\ref{fig:photos} (a) makes a plot of this projection, which is a $128\times 128$ matrix.  For the details on the computation of the projection, we refer the reader to \cite{wirtinger, phasecut_image}. 
Next we  test the algorithms on the Milky Way Galaxy photograph of size $1080\times 1920$, see figure~\ref{fig:photos}~(b). Since it is an RGB photograph, we run the algorithms on each R, G, B channels independently. The computational results for both the projection of the molecule's density map and the Milky Way Galaxy are listed in table~\ref{table2}. For the Galaxy, we consider a random test to be successful if the algorithm can successfully reconstruct all the three R, G, B images for the generated coded diffraction patterns. The number of iterations (cycles) and the computation time for the Galaxy are average results over all the successful recoveries and the three R, G, B channels.

\begin{table}[htp]
\centering
\small
\caption{Computational results of ER, Wirtinger Flow and the block Kaczmarz method
for reconstructing the $3$D Caffeine molecule's density projection and the RGB image Milky Way Galaxy to high accuracy. Four different numbers of coded diffraction patterns $L=4,8,12,16$ are tested.\label{table2}}
\vspace{0.2cm}
\begin{tabular}{ccccccccccc}
\hline
\multicolumn{2}{c}{} & \multicolumn{4}{c}{Caffeine}& & \multicolumn{4}{c}{Galaxy}\\
\cline{3-6}\cline{8-11}
\multicolumn{2}{c}{} & $L=4$ & $L=8$ & $L=12$ & $L=16$& & $L=4$ & $L=8$ & $L=12$ & $L=16$\\
\hline
\multirow{4}{*}{ER} & \#succ &0 &6 &10 &10 & &0 &2 &2 &10\\
 & rel.err &- &1.6e-10 &1.4e-10 &1.3e-10 & &- &1.7e-10 &1.3e-10 &1.2e-10\\
 & \#its &- &227 &95 &69 & &- &904 &353 &154\\
 & time(s) &- &3.26 &2.11 &2.0 & &- &2038 &858.3 &686.2\\
\hline
 & \#succ &0 &3 &10 &10 & &0 &0 &4 &10\\
Wirtinger & rel.err &- &1.1e-10 &2.0e-10 &1.7e-10 & &- &- & 1.9e-10&1.7e-10\\
Flow & \#its &- &1057 &279 &233 & &- &- &467 &277\\
 & time(s) &- &12.4 &4.8 &5.4 & &- &- &1362.8 &1063.2\\

\hline
 & \#succ &10 &10 &10 &10 & &10 &10 &10 &10\\
block & rel.err &7.8e-10 &1.1e-10 &2.0e-11 &9.4e-12 & &7.2e-10 &1.3e-10 &3.4e-11 &1.2e-11\\
Kaczmarz & \#cycles &169 &23 &12 &8 & &296 &28 &19 &13\\
 & time(s) &\textbf{1.4} &\textbf{0.4} &\textbf{0.33} &\textbf{0.3} & &\textbf{356} &\textbf{69.7} &\textbf{69.8} &\textbf{65.3}\\

\hline
\end{tabular}
\end{table}

First the table shows that $L=4$ is sufficient for the block Kaczmarz method to successfully reconstruct both the molecule's density map projection and the Galaxy from the magnitude measurements, which coincides with our observations in the random simulations, see figure~\ref{fig:prob_of_succ} (f). For both ER and Wirtinger Flow, reconstructing the  sophisticated Galaxy requires more numbers of the coded diffraction patterns than reconstructing the molecule's density map projection. Regarding to the computation time, the block Kaczmarz method is overall ten times faster than ER and twenty times faster than Wirtinger Flow.
\section{Proof of theorem~\ref{thm:simple}}\label{sec:proof}
The proof of theorem~\ref{thm:simple} uses a result from \cite{Need10a}, which is stated below.
\begin{lemma}\label{lem1}
Let $Ax=y$ be a linear system with $A\in\C^{m\times n}$ being standardized. Let $x_{l-1}$ be any vector in $\C^n$ and $x_l$ be the 
random projection of $x_{l-1}$ computed from the randomized Kaczmarz method for the inconsistent linear 
system $Ax = y+e$. Let $\hx$ be the solution to  $Ax=y$. Then we have
\begin{equation*}
\E\lsb \ln x_l-\hx\rn_2^2\rsb\leq \lb 1-\frac{\sigma^2_{\min}(A)}{m}\rb\ln x_{l-1}-\hx\rn_2^2+R^2,
\end{equation*}
where $R=\max_{1\leq r\leq m}|e_r|$.
\end{lemma}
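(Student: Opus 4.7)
The plan is to establish the one-step contraction by subtracting $\hx$ from the randomized Kaczmarz update, decomposing the resulting error into an orthogonal projection piece plus a noise piece, and averaging over the uniformly random row index (which, since $A$ is standardized, is selected with probability $1/m$).

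First I would write the randomized Kaczmarz update for the inconsistent system $Ax=y+e$ explicitly. Because $\ln a_r\rn_2=1$, the update reads $x_l = x_{l-1}+\lb(y_r+e_r)-\la a_r,x_{l-1}\ra\rb a_r$. Using $y_r=\la a_r,\hx\ra$ from consistency of $Ax=y$ and rearranging produces the compact error recursion
\begin{equation*}
x_l-\hx = \lb I-a_r a_r^*\rb(x_{l-1}-\hx)+e_r a_r.
\end{equation*}
Since $\ln a_r\rn_2=1$, the Hermitian matrix $I-a_r a_r^*$ is the orthogonal projector onto the hyperplane perpendicular to $a_r$, so $(I-a_r a_r^*)a_r=0$. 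Consequently the two summands on the right are orthogonal in $\C^n$, and the Pythagorean identity gives
\begin{equation*}
\ln x_l-\hx\rn_2^2 = \ln(I-a_r a_r^*)(x_{l-1}-\hx)\rn_2^2+|e_r|^2 = \ln x_{l-1}-\hx\rn_2^2-\labs\la a_r,x_{l-1}-\hx\ra\rabs^2+|e_r|^2,
\end{equation*}
where the last equality uses idempotency of the projector together with $\ln a_r\rn_2=1$.

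Second, I would take conditional expectation over $r$ chosen uniformly on $\lcb 1,\dots,m\rcb$, obtaining
\begin{equation*}
\E\lsb\ln x_l-\hx\rn_2^2\rsb = \ln x_{l-1}-\hx\rn_2^2-\frac{1}{m}\ln A(x_{l-1}-\hx)\rn_2^2+\frac{1}{m}\sum_{r=1}^m|e_r|^2.
\end{equation*}
The full-column-rank assumption yields $\ln Av\rn_2^2\geq\sigma_{\min}^2(A)\ln v\rn_2^2$ for every $v\in\C^n$, while $\frac{1}{m}\sum_{r=1}^m|e_r|^2\leq\max_{1\leq r\leq m}|e_r|^2=R^2$. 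Combining the two bounds produces the claimed contraction with rate $1-\sigma_{\min}^2(A)/m$ and additive noise term $R^2$.

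The only point requiring genuine care is the Pythagorean split in the complex setting: one must verify that the cross term $2\,\mathrm{Re}\la(I-a_r a_r^*)(x_{l-1}-\hx),\,e_r a_r\ra$ vanishes, which it does because $(I-a_r a_r^*)a_r=0$ together with Hermitian symmetry of the projector allows the adjoint to move across the inner product. Everything else is a direct adaptation of the original one-step analysis of randomized Kaczmarz for consistent systems, with the noise term $e_r a_r$ carried through the computation termwise and bounded at the very end.
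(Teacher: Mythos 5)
Your proof is correct and complete. Note that the paper itself does not prove this lemma at all: it is stated as an imported result from \cite{Need10a}, so there is no in-paper argument to compare against. Your derivation --- the error recursion $x_l-\hx=(I-a_ra_r^*)(x_{l-1}-\hx)+e_ra_r$, the Pythagorean split justified by $(I-a_ra_r^*)a_r=0$, averaging over the uniformly chosen row (which is the correct sampling law here since standardization makes $\ln a_r\rn_2^2/\ln A\rn_F^2=1/m$), and the final bounds $\ln Av\rn_2^2\geq\sigma_{\min}^2(A)\ln v\rn_2^2$ and $\frac{1}{m}\sum_r|e_r|^2\leq R^2$ --- is exactly the standard one-step analysis from that reference, so the lemma is now self-contained rather than cited. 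One cosmetic remark: full column rank is not actually needed for the inequality $\ln Av\rn_2^2\geq\sigma^2_{\min}(A)\ln v\rn_2^2$; if $\sigma_{\min}(A)=0$ the stated bound holds trivially, so the lemma is true as stated without that hypothesis.
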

\begin{proof}[Proof of theorem~\ref{thm:simple}]
Denote by $x_{l}^r$ the  estimate obtained when the $r$th row is selected, that is 
\begin{equation*}
x_{l}^r = x_{l-1}+\frac{\sqrt{y_r}e^{i\theta_{l-1}^r}-\la a_r,x_{l-1}\ra}{\ln a_r\rn_2^2}a_r, 
\end{equation*}
where $\theta_{l-1}^r = \angle\la a_r,x_{l-1}\ra,~r=1,\cdots,m$. Then at the $(l-1)$th iteration one has $\P\lcb x_{l}=x_{l}^r\rcb=\frac{1}{m}$. 
Expressed differently, $x_l$ can be viewed as the solution obtained by applying one iteration of the randomized Kaczmarz method  for the inconsistent linear system
\begin{equation}\label{eq:incons}
Ax = \sqrt{y}\odot e^{i\theta_{l-1}}~\mbox{ where }e^{i\theta_{l-1}} = \begin{bmatrix}e^{-i\theta_{l-1}^1}&\cdots&e^{-i\theta_{l-1}^m}\end{bmatrix}^*.
\end{equation}
In order to apply lemma~\ref{lem1}, we will form a consistent linear system in each iteration as follows.
Define $\phi_{l-1}=\argmin_{\theta\in[0,2\pi)}\ln x_{l-1}-\hx e^{i\theta}\rn_2$ and $\psi_{l-1}^r=\angle\la a_r,\hx e^{i\phi_{l-1}}\ra$. Then 
the linear system 
\begin{equation}\label{eq:cons}
Ax = \sqrt{y}\odot e^{i\psi_{l-1}}~\mbox{ where }e^{i\psi_{l-1}} = \begin{bmatrix}e^{-i\psi_{l-1}^1}&\cdots&e^{-i\psi_{l-1}^m}\end{bmatrix}^*
\end{equation}
is consistent and the solution to \eqref{eq:cons} is given by $\hx e^{i\phi_{l-1}}$. Rewrite \eqref{eq:incons} as 
\begin{equation*}
Ax = \sqrt{y}\odot e^{i\psi_{l-1}} + \underbrace{\sqrt{y}\odot\lb e^{i\theta_{l-1}}-e^{i\psi_{l-1}}\rb}_{e}.
\end{equation*}
Then applying lemma~\ref{lem1} gives 
\begin{equation*}
\E\lsb\ln x_l-\hx e^{i\phi_{l-1}}\rn_2^2\rsb\leq \lb 1-\frac{\sigma^2_{\min}(A)}{m}\rb\ln x_{l-1}-\hx e^{i\phi_{l-1}}\rn_2^2 +R^2,
\end{equation*}
where 
\begin{equation*}
\max_{1\leq r\leq m} \left|\sqrt{y_r}\lb e^{i\theta_{l-1}^r}-e^{i\psi_{l-1}^r}\rb\right|\leq
2\max_{1\leq r\leq m}\sqrt{y_r}:=R,
\end{equation*}
and the expectation is conditioned on $x_{l-1}$.
Since for every $1\leq r\leq m$, $\dist(x_l^r,\hx)\leq\ln x_l^r-\hx e^{i\phi_{l-1}}\rn_2$, we have
\begin{eqnarray*}
\E\lsb\dist^2(x_l,\hx)\rsb&\leq&\E\lsb\ln x_l-\hx e^{i\phi_{l-1}}\rn_2^2\rsb\\
&\leq&\lb 1-\frac{\sigma^2_{\min}(A)}{m}\rb\ln x_{l-1}-\hx e^{i\phi_{l-1}}\rn_2^2 +R^2\\
&=&\lb 1-\frac{\sigma^2_{\min}(A)}{m}\rb\dist^2(x_{l-1},\hx) +R^2,
\end{eqnarray*}
where the last equality follows from the definition of $\dist(\cdot,\cdot)$. Taking full expectation on both sides and applying 
the resulted relationship repeatedly yields
\begin{eqnarray*}
\E\lsb\dist^2(x_l,\hx)\rsb&\leq& \lb 1-\frac{\sigma^2_{\min}(A)}{m}\rb^l\dist^2(x_0,\hx)+\sum_{k=0}^{l-1}\lb 1-\frac{\sigma^2_{\min}(A)}{m}\rb^kR^2\\
&\leq&\lb 1-\frac{\sigma^2_{\min}(A)}{m}\rb^l\dist^2(x_0,\hx)+\frac{mR^2}{\sigma^2_{\min}(A)},
\end{eqnarray*} 
which concludes the proof by reintroducing the value of $R$.
\end{proof}
\section{Conclusion and future directions}\label{sec:conclusion}
Phase retrieval is an important topic which has received intensive investigations recently.
This manuscript develops the Kaczmarz methods for the generalized phase retrieval problem.
The empirical results demonstrate that the Kaczmarz methods are superior to ER and Wirtinger Flow
 in terms of the successful recovery probabilities and overall computation time. For the block Kaczmarz 
method, we show that the condition numbers of the submatrices play a key role in 
successful recovery in theorem~\ref{prop2}. 
 To the best of our knowledge, this is the first paper which 
suggests applying the Kaczmarz methods to solve the systems of phaseless equations.

A central question that remains to be answered is how many measurements  are needed for the Kaczmarz methods to successfully find
 the solution to the generalized phase retrieval problem. For the real case when the signals and measurement vectors are all 
real-valued, the Kaczmarz methods for phase retrieval can reduce to the Kaczmarz methods for linear equations if the 
initial point is close enough to the true solution $\hx$ because of the separation of $\hx$ and $-\hx$. So in principle, $O(n\log n)$ number of 
measurements are sufficient for the Gaussian real model, see the remark after \eqref{eq:eig_init}. However, this is not true 
for the complex measurements because $\hx e^{i\theta}$ is continuous with respect to $\theta\in[0,2\pi)$.  For the randomized Kaczmarz methods,
notice that the proof of theorem~\ref{thm:simple} does not use any information provided by the phase selection heuristic.  A recovery guarantee is 
likely to require an analysis of how well the phase selection heuristic approximates the phase of the true solution in each iteration.

This manuscript opens a door of applying the existing techniques in the accelerated Kaczmarz methods for linear equations to the phase retrieval problem, for example by introducing  
relaxation. With the relaxation, the update rule in the simple Kaczmarz method (Alg.~\ref{alg:simp_kacz}) becomes
\begin{equation*}
x_{l+1} = x_l+\lambda_l\frac{\sqrt{y_r}e^{i\theta_l}-\la a_r,x_l\ra}{\ln a_r\rn_2^2}a_r.
\end{equation*}
It is suggested in \cite{StVe09b} that the randomized simple Kaczmarz method for linear systems can be accelerated for Gaussian measurement matrices if relaxation parameter is set
to $\lambda_l=1+n/m$ in each iteration. The numerical simulations show that the same selection of the relaxation parameter can also accelerate the simple Kaczmarz method for phase 
retrieval, see figure~\ref{fig:kacz_vs_relaxed}. The Kaczmarz methods for phase retrieval are designed by extending one of the existing linear solvers to solve the system of 
quadratic equations. Other typical linear solvers such as conjugate gradient method may well be similarly effective.

\begin{figure}[t]
\centering
\includegraphics[height=1.5in,width=3.2in,trim=0.7in 3in 0.7in 3in]{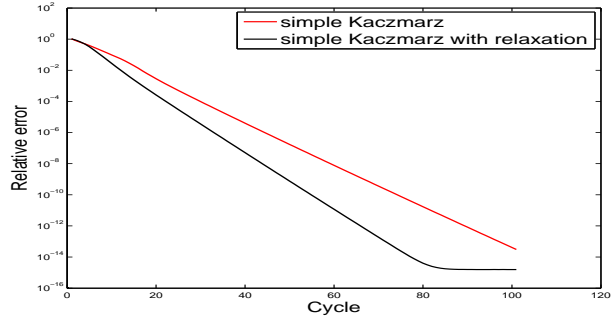}
\caption{Convergence rate comparison of the simple Kaczmarz method  with and without relaxation for the Gaussian complex model
with $n=256$ and $m=4n$. The relaxation parameter $\lambda_l$ is selected to be $1+n/m$ in each iteration. Averaged over $50$ random tests.\label{fig:kacz_vs_relaxed}}
\end{figure}
As demonstrated in the literature \cite{fienup1,fienup2}, ER often works much better if  a priori knowledge about the signals is incorporated, such as real-valued, nonnegative and sparsity.  
Despite the already very good performance of the Kaczmarz methods for our test problems, it is worth investigating whether their performance can be further improved by exploring the structures of the signals.
Finally, it may also be possible to extend the Kaczmarz methods to other related problems, such as blind deconvolution \cite{blind_conv} and self-calibration \cite{self_cal}.
\section*{Acknowledgments}
The author would like to thank Thomas Strohmer for helpful suggestion. He would also like to thank the reviewers for useful comments which have improved the manuscript.  
\bibliographystyle{unsrt}
\bibliography{phase}

\begin{thebibliography}{10}

\bibitem{Ha93a}
R.W. Harrison.
\newblock Phase problem in crystallography.
\newblock {\em Journal of the Optical Society of America A}, 10(5):1046--1055,
  1993.

\bibitem{Buetal07}
O.~Bunk, A.~Diaz, F.~Pfeiffer, C.~David, B.~Schmitt, and D.K. Satapathy.
\newblock Diffractive imaging for periodic samples: {R}etrieving
  one-dimensional concentration profiles across microfluidic channels.
\newblock {\em Acta Crystallographica Section A: Foundations of
  Crystallography}, 63(4):306--314, 2007.

\bibitem{Mietal08}
J.~Miao, T.~Ishikawa, Q.~Shen, and T.~Earnesty.
\newblock Extending x-ray crystallography to allow the imaging of
  noncrystalline materials, cells, and single protein complexes.
\newblock {\em Annual Review of Physical Chemistry}, 59:387--410, 2008.

\bibitem{BaCaEd06a}
R.~Balan, P.~Casazza, and D.~Edidin.
\newblock On signal reconstruction without phase.
\newblock {\em Applied and Computational Harmonic Analysis}, 20(3):345--356,
  2006.

\bibitem{CoEdHeVi2013a}
A.~Conca, D.~Edidin, M.~Hering, and C.~Vinzant.
\newblock An algebraic characterization of injectivity in phase retrieval.
\newblock arXiv:1312.0158v1, 2013.

\bibitem{GeSaPhase}
R.~W. Gerchberg and W.~O. Saxton.
\newblock A practical algorithm for the determination of the phase from image
  and diffraction plane pictures.
\newblock {\em Optik}, 35(237), 1972.

\bibitem{fienup1}
J.~R. Fienup.
\newblock Reconstruction of an object from the modulus of its fourier
  transform.
\newblock {\em Optics letters}, 3(1):27--29, 1978.

\bibitem{fienup2}
J.~R. Fienup.
\newblock Phase retrieval algorithms: A comparison.
\newblock {\em Applied optics}, 21(15):2758--2769, 1982.

\bibitem{phaselift1}
E.~J. Cand{\`e}s, Y.~Eldar, T.~Strohmer, and V.~Voroninski.
\newblock Phase retrieval via matrix completion.
\newblock {\em SIAM J. on Imaging Sciences}, 6(1):199--225, 2013.

\bibitem{phaselift2}
E.~J. Cand{\`e}s, T.~Strohmer, and V.~Voroninski.
\newblock Phase{L}ift: {E}xact and stable signal recovery from magnitude
  measurements via convex programming.
\newblock {\em Communications on Pure and Applied Mathematics}, 66:1241--1274,
  2013.

\bibitem{phaselift3}
E.~J. Cand{\`e}s and X.~Li.
\newblock Solving quadratic equations via {P}hase{L}ift when there are about as
  many equations as unknowns.
\newblock {\em Foundations of Computational Mathematics}, 14(5):1017--1026,
  2013.

\bibitem{phaselift4}
E.~J. Cand{\`e}s, X.~Li, and M.~Soltanolkotabi.
\newblock Phase retrieval from coded diffraction patterns.
\newblock {\em Applied and Computational Harmonic Analysis}, 2014.
\newblock To appear.

\bibitem{phasecut}
I.~Waldspurger, A.~d'Aspremont, and S.~Mallat.
\newblock Phase recovery, {M}ax{C}ut and complex semidefinite programming.
\newblock {\em Mathematical Programming, Series A}, 2013.
\newblock Publised online.

\bibitem{wirtinger}
E.~J. Cand{\`e}s, X.~Li, and M.~Soltanolkotabi.
\newblock Phase retrieval via {W}irtinger flow: {T}heory and algorithms.
\newblock ar{X}iv:1407.1065, 2014.

\bibitem{gamp}
P.~Schniter and S.~Rangan.
\newblock Compressive phase retrieval via generalized approximate message
  passing.
\newblock In {\em Proceedings of 50th Annual Allerton Conference on
  Communication, Control, and Computing}, pages 815--822, 2012.

\bibitem{gespar}
Y.~Shechtman, A.~Beck, and Y.~C. Eldar.
\newblock {GESPAR}: {E}fficient phase retrieval of sparse signals.
\newblock {\em Transactions on Signal Processing}, 62(4):928--938, 2014.

\bibitem{kaczmarz}
S.~Kaczmarz.
\newblock Angen{\"a}herte aufl{\"o}sung von systemen linearer gleichungen.
\newblock {\em Bulletin International de l'Acad\'emie Polonaise des Sciences et
  des Lettres A}, 35:355--357, 1937.

\bibitem{art}
R.~Gordon, R.~Bender, and G.~Herman.
\newblock Algebraic reconstruction techniques (art) for threedimensional
  electron microscopy and x-ray photography.
\newblock {\em Journal of Theoretical Biology}, 29:471--481, 1970.

\bibitem{kacrate1}
F.~Deutsch and H.~Hundal.
\newblock The rate of convergence for the method of alternating projections.
  ii.
\newblock {\em Journal of Mathematical Analysis and Applications},
  205(2):381--405, 1997.

\bibitem{kacrate2}
A.~Gal{\'a}ntai.
\newblock On the rate of convergence of the alternating projection method in
  finite dimensional spaces.
\newblock {\em Journal of Mathematical Analysis and Applications},
  310(1):30--44, 2005.

\bibitem{kacrate3}
J.~Xu and L.~Zikatanov.
\newblock The method of alternating projections and the method of subspace
  corrections in hilbert space.
\newblock {\em Journal of the American Mathematical Society}, 15(3):573--597,
  2002.

\bibitem{StVe09b}
T.~Strohmer and R.~Vershynin.
\newblock A randomized {K}aczmarz algorithm with exponential convergence.
\newblock {\em Journal of Fourier Analysis and Applications}, 15(2):262--278,
  2009.

\bibitem{Elf80}
T.~Elfving.
\newblock Block-iterative methods for consistent and inconsistent linear
  equations.
\newblock {\em Numerische Mathematik}, 35(1):1--12, 1980.

\bibitem{NeTr12a}
D.~Needell and J.~A. Tropp.
\newblock Paved with good intentions: {A}nalysis of a randomized block
  {K}aczmarz method.
\newblock {\em Linear Algebra and its Applications}, 441:199--221, 2012.

\bibitem{Need10a}
D.~Needell.
\newblock Randomized {K}aczmarz solver for noisy linear systems.
\newblock {\em BIT}, 50(2):395--403, 2010.

\bibitem{Popa99}
C.~Popa.
\newblock Characterization of the solutions set of inconsistent least-squares
  problems by an extended {K}aczmarz algorithm.
\newblock {\em Journal of Applied Mathematics and Computing}, 6:51--64, 1999.

\bibitem{ZoFr2013}
A.~Zouzias and N.~M. Freris.
\newblock Randomized extended {K}aczmarz for solving least squares.
\newblock {\em SIAM Journal on Matrix Analysis and Applications},
  34(2):773--793, 2013.

\bibitem{ElNe11a}
Y.~C. Eldar and D.~Needell.
\newblock Acceleration of randomized {K}aczmarz method via the
  {J}ohnson-{L}indenstrauss lemma.
\newblock {\em Numerical Algorithms}, 58(2):163--177, 2011.

\bibitem{NeWa13a}
D.~Needell and R.~Ward.
\newblock Two-subspace projection method for coherent overdetermined systems.
\newblock {\em Journal of Fourier Analysis and Applications}, 19(2):256--269,
  2013.

\bibitem{LvLe10}
D.~Leventhal and A.~S. Lewis.
\newblock Randomized methods for linear constraints: {C}onvergence rates and
  conditioning.
\newblock {\em Mathematics of Operations Research}, 35(3):641--654, 2010.

\bibitem{nonasym_rand}
R.~Vershynin.
\newblock Introduction to the non-symptotic analysis of random matrices.
\newblock ar{X}iv:1011.3027, 2011.

\bibitem{wavefront}
D.~R. Luke, J.~V. Burke, and R.~G. Lyon.
\newblock Optical wavefront reconstruction: Theory and numerical methods.
\newblock {\em SIAM Review}, 44(2):169--224, 2002.

\bibitem{liao}
A.~Fannjiang and W.~Liao.
\newblock Phase retrieval with random phase illumination.
\newblock {\em J. Opt. Soc. Am. A}, 29(9):1847--1859, 2012.

\bibitem{yinbai}
Z.~D. Bai and Y.~Q. Yin.
\newblock Limit of the smallest eigenvalue of a large-diemensional sample
  convariance matrix.
\newblock {\em Annal of Probability}, 21(3):1275--1294, 1993.

\bibitem{alt_min_pr}
P.~Netrapalli, P.~Jain, and S.~Sanghavi.
\newblock Phase retrieval using alternating minimization.
\newblock ar{X}iv:1306.0160, 2013.

\bibitem{phasecut_image}
F.~Fogel, I.~Waldspurger, and A.~d'Aspremont.
\newblock Phase retrieval for imaging problems.
\newblock ar{X}iv:1304.7735, 2013.

\bibitem{blind_conv}
A.~Ahmed, B.~Recht, and J.~Romberg.
\newblock Blind deconvolution using convex programming.
\newblock ar{X}iv:1211.5608, 2012.

\bibitem{self_cal}
S.~Y. Ling and T.~Strohmer.
\newblock Self-calibration and biconvex compressive sensing.
\newblock ar{X}iv:1501.06864, 2015.

\end{thebibliography}
\end{document}